\title[Stabilizers of $\R$-trees with free isometric actions of $\FN$]
{Stabilizers of $\R$-trees with free isometric actions of $\FN$}
\author[I.~Kapovich]{Ilya Kapovich}
\address{\tt Department of Mathematics, University of Illinois at
  Urbana-Champaign, 1409 West Green Street, Urbana, IL 61801, USA
  \newline http://www.math.uiuc.edu/\~{}kapovich/} \email{\tt
  kapovich@math.uiuc.edu}
\author[M.~Lustig]{Martin Lustig}\address{\tt Math\'ematiques
  (LATP), Universit\'e Paul C\'ezanne -Aix Marseille III,\\  av. Escadrille
  Normandie-Ni\'emen, 13397 Marseille 20, France} \email{\tt Martin.Lustig@univ-cezanne.fr}
\newtheorem{thm}{Theorem}[section] \newtheorem{lem}[thm]{Lemma}
\newtheorem{cor}[thm]{Corollary} 
\newtheorem{prop}[thm]{Proposition} \theoremstyle{definition}
\newtheorem{defn}[thm]{Definition}
\newtheorem{conv}[thm]{Convention} \newtheorem{rem}[thm]{Remark}
\def\epsilon{\varepsilon}
\def\phi{\varphi}
\newcommand{\Out}{\mbox{Out}}
\newcommand{\Aut}{\mbox{Aut}}
\newcommand{\inv}{^{-1}}
\newcommand{\Stab}{\mbox{Stab}}
\newcommand{\Id}{\mbox{Id}}
\newcommand{\FN}{F_N}   
\newcommand{\cvn}{\mbox{cv}_N}
\newcommand{\cvnbar}{\overline{\mbox{cv}}_N}
\newcommand{\CVN}{\mbox{CV}_N}
\newcommand{\CVNbar}{\overline{\mbox{CV}}_N}
\newcommand{\R}{\mathbb R}
\def\strutdepth{\dp\strutbox}
\def \ss{\strut\vadjust{\kern-\strutdepth \sss}}
\def \sss{\vtop to \strutdepth{
\baselineskip\strutdepth\vss\llap{$\diamondsuit\;\;$}\null}}
\def\strutdepth{\dp\strutbox}
\def \sst{\strut\vadjust{\kern-\strutdepth \ssss}}
\def \ssss{\vtop to \strutdepth{
\baselineskip\strutdepth\vss\llap{$\spadesuit\;\;$}\null}}
\def\strutdepth{\dp\strutbox}
\def \ssh{\strut\vadjust{\kern-\strutdepth \sssh}}
\def \sssh{\vtop to \strutdepth{
\baselineskip\strutdepth\vss\llap{$\heartsuit\;\;$}\null}}
\def\qed{\hfill\rlap{$\sqcup$}$\sqcap$\par}
\def\bar{\overline}
\def\strutdepth{\dp\strutbox}
\def \ss{\strut\vadjust{\kern-\strutdepth \sss}}
\def \sss{\vtop to \strutdepth{
\baselineskip\strutdepth\vss\llap{$\diamondsuit\;\;$}\null}}
\def\strutdepth{\dp\strutbox}
\def \sst{\strut\vadjust{\kern-\strutdepth \ssss}}
\def \ssss{\vtop to \strutdepth{
\baselineskip\strutdepth\vss\llap{$\spadesuit\;\;$}\null}}
\def\qed{\hfill\rlap{$\sqcup$}$\sqcap$\par}
\begin{document}

\begin{abstract}
We prove that if $T$ is an $\mathbb R$-tree with a minimal free
isometric action of $F_N$, then the
$\Out(F_N)$-stabilizer of the projective class $[T]$ is virtually cyclic.

For the special case where $T=T_+(\phi)$ is the forward limit tree of
an atoroidal iwip element $\phi\in \Out(F_N)$ this is a consequence of
the results of Bestvina, Feighn and Handel~\cite{BFH97}, via very
different methods.

We also derive a new proof of the Tits alternative for subgroups of $\Out(F_N)$ containing an iwip (not necessarily atoroidal): we prove that every such subgroup $G\le \Out(F_N)$ is either virtually cyclic or contains a free subgroup of rank two. The general case of the Tits alternative for subgroups of $\Out(F_N)$ is due to Bestvina, Feighn and Handel.
\end{abstract}

\thanks{The first author was supported by the NSF
  grant DMS-0904200}

\subjclass[2000]{Primary 20F, Secondary 57M}

\maketitle


\section{Introduction}

The action of the mapping class group of a (closed) surface
on its Teichm\"uller space has been a central theme in geometry, topology and ergodic theory, and it has served as a model case for many related subjects.  One of those is the outer automorphism group $\Out(\FN)$ of a free group $\FN$ of finite rank $N \geq 2$, and its action on {\em Outer space} $\CVN$: This is the projectivized space of metric simplicial trees $T$, equipped with an action of $\FN$ by isometries that is free and minimal. It is compactified (just as is Teichm\"uller space) by adding a {\em Thurston boundary} $\partial \CVN$, and the points of this compactification $\CVNbar = \CVN \cup \partial \CVN$ are homothety classes $[T]$ of $\R$-trees $T$ provided with isometric $\FN$-actions that are minimal and very small. These terms are defined and discussed below in section \ref{sect:prelim}.

A boundary point $[T] \in \partial \CVN$ may well be given by an
$\R$-tree $T$ where the $\FN$-action is free; however, contrary to
trees in the ``interior'' $\CVN$, such a free action will not be
discrete.

Our main result is:

\begin{thm}\label{main}
Let $N\ge 2$, let $T$ be an $\mathbb R$-tree with a minimal free
isometric action of $F_N$,
and let
$[T] \in \CVNbar$
be the projective class of $T$. Then the stabilizer $\Stab_{Out(F_N)}([T])$ is virtually cyclic.

If, in addition, the $F_N$-orbits of branch points are not dense in $T$,
then $\Stab_{Out(F_N)}([T])$ is finite.
\end{thm}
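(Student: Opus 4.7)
The plan is to exploit the stretching homomorphism attached to $[T]$. For $\phi \in \Stab_{\Out(F_N)}([T])$, the equality $[T \cdot \phi] = [T]$ in $\CVNbar$ gives a unique $\lambda(\phi) > 0$ together with an $F_N$-equivariant isometry $T \to \lambda(\phi)\,T$, where the action on the target is twisted by some representative $\Phi \in \Aut(F_N)$ of $\phi$. This yields a group homomorphism $\lambda : \Stab_{\Out(F_N)}([T]) \to (\R_{>0}, \cdot)$, and the main assertion reduces to the two claims: (i) $\ker(\lambda)$ is finite, and (ii) $\mathrm{image}(\lambda)$ is discrete in $\R_{>0}$ (hence trivial or infinite cyclic).

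For (i), if $\lambda(\phi) = 1$ then $T \cdot \phi$ and $T$ are equivariantly isometric, so $\ell_T \circ \Phi = \ell_T$ as functions on $F_N$; since the action on $T$ is free and minimal, $T$ is determined by its translation length function, so it suffices to bound the number of outer automorphisms preserving $\ell_T$. My plan is to use the dual lamination $L^2(T) \subseteq \partial^2 F_N$: for free very small $T$ the Coulbois--Hilion--Lustig $Q$-index bound $i(T) \leq 2N - 2$ equips $L^2(T)$ with a finite amount of combinatorial data (finitely many $F_N$-orbits of singular leaves together with branch-point decorations). Any $\phi \in \ker(\lambda)$ must preserve this finite structure, hence permutes a finite set, and so $\ker(\lambda)$ is finite.

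For (ii), I would show the achievable stretch factors are bounded away from $1$. The natural tool is the intersection form on $\cvnbar \times \Curr(F_N)$: for $\phi$ with $\lambda(\phi) > 1$, the normalized pairing $\langle T, \,\cdot\, \rangle$ is contracted by $\lambda(\phi)^{-1}$ under $\phi$ on the compact simplex of dual currents, and the rigidity of $L^2(T)$ forces a positive spectral gap for such contractions. Combining (i) and (ii) yields virtual cyclicity. The main obstacle will clearly be (i): converting the intuitive rigidity of $L^2(T)$ for free $F_N$-actions into a genuine finiteness statement for its $\Out(F_N)$-stabilizer, via careful bookkeeping of how an automorphism permutes the finitely many pieces of $Q$-index data. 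For the supplementary statement, when the $F_N$-orbits of branch points are not dense in $T$ the tree is of non-mixing (Levitt-decomposable) type, and I expect a direct argument shows that no $\phi$ can act on $T$ with $\lambda(\phi) \neq 1$; then the full stabilizer coincides with $\ker(\lambda)$ and is therefore finite by (i).
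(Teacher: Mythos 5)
Your overall strategy — analyzing the stretching-factor homomorphism $\lambda : \Stab_{\Out(F_N)}([T]) \to (\R_{>0},\cdot)$ and showing that its kernel is finite and its image is discrete — is exactly the skeleton of the paper's proof. But both of your steps have genuine gaps, and the first one is serious.

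For (i), the inference ``$\phi$ permutes a finite set of $Q$-index/lamination data, hence $\ker(\lambda)$ is finite'' does not follow: any group, finite or not, permutes a finite set, and the point at issue is precisely whether the \emph{kernel} of that permutation action (intersected with $\ker(\lambda)$) is finite. This is where all the real work lies. The paper's route is to pass to the finite-index subgroup $K_T$ of $\Stab_{\Out(F_N)}([T])$ acting trivially on the finite set of $F_N$-orbits of branch points and directions (Gaboriau--Levitt gives the finiteness of that set), and then to prove that $\ker(\lambda|_{K_T})$ is actually trivial: any isometry $H$ of $T$ representing a lift $\Phi$ of such a $\phi$ and fixing a branch point must fix every direction at that point (because the $F_N$-action is free and $H$ preserves every orbit of directions), and then an $\R$-tree rigidity argument forces $H=\Id_T$ and hence $\Phi = \Id_{F_N}$. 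Freeness of the $F_N$-action is used essentially at both steps and cannot be replaced by the combinatorial finiteness of the lamination data alone — this is consistent with the fact, noted in the paper, that the theorem fails for very small non-free $T$. Your proposal never confronts this and so has a hole precisely where you yourself flag that the difficulty will lie.

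For (ii), the ``positive spectral gap'' assertion is a wish, not an argument; the claim that the image of $\lambda$ is discrete is nontrivial and needs a concrete mechanism. The paper's mechanism is the eigenray machinery: if the image were dense, one could produce a sequence $\phi_i \in K_T$ with $\lambda(\phi_i) \to 2$ (say), choose homotheties $H_i$ fixing a common branch point, arrange (by conjugating by high powers of a fixed $\phi_1 \in K_T$) that the corresponding eigenrays share a long common initial segment, and then track how the homotheties move a nearby branch point $P$. The $F_N$-elements $w_j^{-1}w_i$ arising from this bookkeeping end up acting on a fixed nondegenerate arc with translation lengths that tend to $0$ but stay positive, contradicting freeness of the action. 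An intersection-form / currents approach is plausible in spirit (indeed a footnote in the paper records an alternative proof of discreteness via Lustig's structure theorem), but you would need to actually produce the gap. For the supplementary statement your instinct is right: if branch-point orbits are not dense, the quotient $T/(T^i_{dense})_i$ is a nontrivial simplicial tree with finitely many edge orbits, so maximal arcs without interior branch points have a positive lower length bound; a homothety with $\lambda \neq 1$ would violate that bound under iteration. That part you could complete, but it still reduces to knowing $\ker(\lambda)$ is finite, so the gap in (i) remains the central missing piece.
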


Theorem~\ref{main} is established in Theorem~\ref{thm:vc} below, which
actually provides a more detailed description of the stabilizer
$\Stab_{Out(F_N)}([T])$.  We show in Theorem~\ref{thm:vc} that
in $\Stab_{Out(F_N)}([T])$ there is always a canonically defined finite
normal subgroup $P_T\triangleleft \Stab_{Out(F_N)}([T])$ such that either $\Stab_{Out(F_N)}([T])$ is finite
and $\Stab_{Out(F_N)}([T])=P_T$, or else $\Stab_{Out(F_N)}([T])$ is a
semi-direct product $\Stab_{Out(F_N)}([T])=P_T\rtimes \mathbb Z$. We
also show that, if the tree $T$ from Theorem~\ref{main} does not have
dense $F_N$-orbits of branch points, then $\Stab_{Out(F_N)}([T])$ is
finite and thus equal to $P_T$. A result of Wang and Zimmermann~\cite{WZ} shows that every finite subgroup of $\Out(F_N)$ has order $\le N! \cdot 2^N$. Therefore Theorem~\ref{thm:vc} implies that, for $T\in \cvnbar$ as in Theorem~\ref{main}, the stabilizer $\Stab_{Out(F_N)}([T])$ has a cyclic subgroup (either trivial or infinite cyclic) of index at most $N! \cdot 2^N$.

The study in~\cite{BFH97} of $\Out(F_N)$-stabilizers of
particular points in compactified Outer space (corresponding to forward limiting $\R$-trees, explained below)
was a starting point in the proof by Bestvina, Feighn and Handel of the Tits
Alternative for $\Out(F_N)$~\cite{BFH00,BFH05}. Their paper \cite{BFH97} was a substantial source of
motivation for the present paper.

Theorem~\ref{main} can be viewed as part of a general theme, which
originates from Kleinian groups and from Teichm\"uller theory, that aims to
investigate the dynamics of the action of elements or of subgroups of
$\Out(\FN)$ on the space $\CVNbar$ (or on related spaces), and then to
deduce algebraic information from the geometric data obtained.
In this spirit, a useful dynamic information about a group acting
on a (compact) space is the fact that the action of certain group
elements has {\em North-South dynamics}: By this we mean that there
are precisely two fixed points (the two ``poles''), and that every other
point has the ``north pole'' as forward limit point and the ``south
pole'' as backwards limit point. Even stronger implications are
possible if the convergence is uniform on compact subsets (for a
precise definition see Proposition \ref{prop:LL+} below). For example,
it is known that a pseudo-Anosov mapping class has such a strong form of North-South dynamics on the Thurston compactification of the
Teichm\"uller space of a hyperbolic surface.

For $\Out(\FN)$ the analogous elements to pseudo-Anosov mapping
classes are {\em atoroidal iwip automorphisms}, see Definition
\ref{defn:iwips}. In \cite{LL} it has been shown that their induced
dynamics on compactified Outer space $\CVNbar$ is precisely of this
North-South type with uniform convergence on compact subsets.

The two ``poles'', i.e. the fixed points $[T_+(\phi)], [T_-(\phi)] \in
\CVNbar$, of such an atoroidal iwip automorphism $\phi \in \Out(\FN)$
are given by $\R$-trees $T_+(\phi)$ and $T_-(\phi)$ for which the
isometric action of $\FN$ is free, so that our Theorem \ref{main}
applies. There is also a fairly explicit way of
understanding the forward limiting tree $T_+(\phi)$ of $\phi$ via train-track representatives of $\phi$
(see \cite{BFH97, BH92,GJLL}). As noted above, in \cite{BFH97}
Bestvina, Feighn and Handel proved that if $\phi\in \Out(F_N)$ is an
iwip (which they do not require to be atoroidal) then
$Stab_{Out(F_N)}([T_+(\phi)])$ is virtually cyclic.

Theorem~\ref{main} generalizes this result for atoroidal iwips. As noted below, we also recover the conclusion that
$Stab_{Out(F_N)}([T_+(\phi)])$ is virtually cyclic for iwips that are
not atoroidal, via a direct reduction of that case to known facts in
surface theory.

Note that Theorem~\ref{main} applies to a greater class of
trees than the forward limit trees of atoroidal iwips. In particular,
it is possible for a non-iwip to fix the projective class of a free
$F_N$-tree. For example, if $\phi\in Out(F_N)$ is an atoroidal iwip,
then the ``double'' $\bar\phi\in \Out(F_{2N})$ of $\phi$ is not an iwip
but the forward limit tree of $\bar\phi$ (obtain from doubling a
train-track representative of $\phi$ and then applying the same
construction as in \cite{GJLL}) has a free $F_{2N}$-action and is
projectively fixed by $\bar\phi$.

In \cite{BFH97} Bestvina, Feighn and Handel also introduced the notion
of a ``stable lamination'' $\Lambda_\phi^+$ of an iwip $\phi$, defined
explicitly in terms of a train-track representative of $\phi$. The
main technical result (Theorem 2.14) of \cite{BFH97} states that for an iwip $\phi\in
\Out(F_N)$ the $\Out(F_N)$-stabilizer of $\Lambda_\phi^+$ is virtually
cyclic. In \cite{KL4} we use Theorem~\ref{main} to recover this
result for atoroidal iwips via geodesic currents on $F_N$ and the
intersection form between $\R$-trees and currents (see
\cite{KL1,KL2,KL3,KL4,Martin,Ka1,Ka2} for background information
regarding geodesic currents on free groups).

In the case where $\phi\in \Out(F_N)$ is an iwip which is not
atoroidal, the action of $F_N$ on $T_+(\phi)$ is not free. However, it
is known~\cite{BH92} that such $\phi$ must come from a pseudo-Anosov
homeomorphism of a compact surface with a single boundary component.
It turns out that in this case one can reduce the proof that
$Stab_{Out(F_N)}([T_+(\phi)])$ is virtually cyclic to known facts
about mapping class groups (see Proposition~\ref{prop:toroidal}
below). Thus, Theorem~\ref{main} and Proposition~\ref{prop:toroidal}
imply the following result originally established in \cite{BFH97}\footnote{As far as we were able to understand, there seems to be a gap in the proof of the main technical result, Theorem 2.14, in \cite{BFH97}.  Namely, the arguments presented there seem insufficient for proving Proposition~2.6~(1) in \cite{BFH97} for the case where, for example, $\psi\in Stab(\Lambda)$ is a reducible polynomially growing automorphism. Specifically, the statement ``Notice that all $H_0$-segments are Nielsen (periodic) or else $h$-iteration will produce arbitrarily long leaf segments
contained in $H_0$ contradicting quasiperiodicity" in the proof Proposition~2.6~(1) in \cite{BFH97} is incorrect and a more involved substitute argument is required to complete the proof of Proposition~2.6~(1).  The gap is fillable and a subsequent paper~\cite{BFH00} of Bestvina, Feighn and Handel gives an independent and more detailed proof of generalizations of the main results from \cite{BFH97}, via more elaborate
train track arguments. Also, Arnaud Hilion suggested to us a different direct way of patching the proof of Proposition~2.6~(1) in \cite{BFH97}, via the improved relative train track methods developed in \cite{BFH00}. Our paper presents an alternative argument for stabilizers of forward limiting trees of iwips, avoiding the train track machinery altogether.}:

\begin{cor}\label{cor:vc}\cite{BFH97}
Let $N\ge 2$ and let $\phi\in \Out(F_N)$ be an iwip.

Then
$Stab_{Out(F_N)}([T_+(\phi)])$ is virtually cyclic.
\end{cor}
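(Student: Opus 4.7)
The plan is to split on whether the iwip $\phi$ is atoroidal, and in each case invoke one of the two ingredients the paper has already made available.

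First, suppose $\phi \in \Out(F_N)$ is an atoroidal iwip. The paper has recalled (citing \cite{GJLL}) that the forward limit tree $T_+(\phi)$ is a minimal $\R$-tree with a free isometric $F_N$-action. Consequently $T_+(\phi)$ satisfies exactly the hypotheses of Theorem~\ref{main}, so we may apply that theorem directly to conclude that $\Stab_{\Out(F_N)}([T_+(\phi)])$ is virtually cyclic. No further work is needed in this case.

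Next, suppose $\phi$ is an iwip that is not atoroidal. By the Bestvina--Handel result \cite{BH92} cited in the introduction, any such $\phi$ is induced by a pseudo-Anosov homeomorphism $f$ of a compact surface $\Sigma$ with a single boundary component, so that $F_N \cong \pi_1(\Sigma)$. In this geometric situation $T_+(\phi)$ is the $\R$-tree dual to the stable measured foliation of $f$, pulled back to the universal cover of $\Sigma$; the $F_N$-action is not free (the boundary subgroup fixes a point), which is precisely why Theorem~\ref{main} does not apply. The plan here is to invoke Proposition~\ref{prop:toroidal}, whose job is exactly to handle this case by translating the $\Out(F_N)$-stabilizer of $[T_+(\phi)]$ into a stabilizer inside the mapping class group of $\Sigma$ and appealing to the known (Thurston/Ivanov-type) fact that the stabilizer of a projective class of a pseudo-Anosov's stable lamination/tree in $\mathrm{MCG}(\Sigma)$ is virtually cyclic.

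Combining the two cases yields the corollary. The substantive step is clearly the atoroidal case, where we rely on the full strength of Theorem~\ref{main}; the non-atoroidal case is a genuine but comparatively soft reduction to surface theory via Proposition~\ref{prop:toroidal}. The only point that requires any care is to confirm that $T_+(\phi)$ in the atoroidal case does indeed give a \emph{free} action of $F_N$ (so that Theorem~\ref{main} is applicable rather than just its extension to very small actions), but this is exactly the characterization of ``atoroidal'' in terms of freeness of the boundary tree and has already been noted in the introductory discussion of the two ``poles'' of $\phi$ on $\CVNbar$.
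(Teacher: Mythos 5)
Your proof is correct and follows exactly the same decomposition the paper uses: for atoroidal iwips one applies Theorem~\ref{main} (since $T_+(\phi)$ is a free $F_N$-tree, as recalled from \cite{GJLL}), and for non-atoroidal iwips one invokes Proposition~\ref{prop:toroidal} to reduce to the mapping class group of the surface produced by \cite{BH92}. Nothing further is needed.
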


\medskip

The proofs in \cite{BFH97} rely on exploiting the train-track
machinery for elements of $\Out(F_N)$. Our proof of Theorem~\ref{main}
uses an alternative approach and uses the machinery of ``homotheties''
and ``eigenrays'' for trees projectively fixed by some elements of
$\Out(F_N)$. We will now give a brief overview of our argument:

If $\phi\in \Stab_{Out(F_N)}([T])$ then $T$ is $F_N$-equivariantly
isometric to the tree $\lambda(\phi) T\Phi$ where $\lambda(\phi)>0$ is
the ``stretching factor'' of $\phi$ and where $\Phi\in \Aut(F_N)$ is a
lift of $\phi$ to $\Aut(F_N)$. This means that there exists a
homothety $H:T\to T$ with stretching factor $\lambda(\phi)$
such that for every $g\in F_N$ and $x\in T$ we have
\[
H(gx)=\Phi(g)H(x).
\]
Such homotheties $H$
{\em represent} elements of the stabilizer
$\Stab_{Out(F_N)}([T])$, and they turn out to have a number of nice
properties (compare \cite{GJLL, LL,Lu2}), which are recalled below in section \ref{sect:eigenrays}.
In particular, if $H$ fixes a branch point of $T$ and a
``direction'' $d$ at that branch point, then $H$ possesses a
well-defined ``eigenray'' $\rho$ starting at $x$ in direction $d$ such
that $H(\rho)=\rho$, so that $H$ acts on the ray $\rho$ as multiplication by
$\lambda(\phi)$ does on $\mathbb R_{\ge 0}$. The stretching-factor map
$\lambda: \Stab_{Out(F_N)}([T])\to\ \mathbb R_{>0}$
is a group homomorphism. To prove Theorem~\ref{main} we show that the image
of $\lambda$ is cyclic and the kernel of $\lambda$ is finite. The finite normal subgroup
$P_T\triangleleft \Stab_{Out(F_N)}([T])$
mentioned above is precisely the kernel of the homomorphism $\lambda$:
\[
P_T=Ker(\lambda)=\{\phi\in \Stab_{Out(F_N)}([T]): \lambda(\phi)=1\}.
\]
Thus $P_T$ consists precisely of all those elements of $\Stab_{Out(F_N)}([T])$ which are represented by isometries of $T$.

Our proof that the image of $\lambda$ is cyclic can be pushed through
to work for the case of arbitrary very small tree $T\in \cvnbar$.\footnote{Gilbert Levitt has shown us that the fact, that the image of the map $\lambda$ is cyclic, can alternatively be derived from Theorem 4.3 of \cite{Lu1}.}
However, the argument that $Ker(\lambda)$ is finite relies crucially
on the fact that $T$ is a free $F_N$-tree.

In particular, the conclusion of Theorem~\ref{main} fails if we allow
very small trees with non-trivial point stabilizers and trivial arc
stabilizers. For example, if $T$ is the Bass-Serre tree corresponding
to a proper free product decomposition $F_N=A\ast B$ (where both $A$
and $B$ are nontrivial and at least one of $A$ or $B$ is non-cyclic),
then there are many automorphisms of $F_N$ that preserve this free
product decomposition and hence fix $T$ (and thus $[T]$).
Nevertheless, the proof of Theorem~\ref{main} provides an approach for
understanding stabilizers $\Stab_{Out(F_N)}([T])$ for arbitrary $T\in
\cvnbar$. Since the image of $\lambda$ is also cyclic here, the main
task becomes to understand the structure of the kernel $Ker(\lambda)$.

As a consequence of Theorem~\ref{main} and of the ``North-South"
dynamics of atoroidal iwips on $\CVNbar$~\cite{LL} we derive (see \S
\ref{sect:TA}) without much effort a new proof of the Tits Alternative
for subgroups of $\Out(F_N)$ that contain an arbitrary iwip (not
necessarily atoroidal):

\begin{cor}\label{TA}
Let $G\le \Out(F_N)$ be a subgroup such that $G$ contains
an iwip element $\phi$. Then exactly one of the following occurs:
\begin{enumerate}
\item The group $G$ is virtually cyclic.
\item There exists $g\in G$ and $M\ge 1$ such that $\langle \phi^M, g^{-1}\phi^M g\rangle \le G$ is free of rank two with free basis $\phi^M, g^{-1}\phi^M g$
\end{enumerate}
\end{cor}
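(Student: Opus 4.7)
My plan is a classical ping-pong argument on $\CVNbar$ driven by the uniform North-South dynamics of the iwip $\phi$. Let $[T_\pm] := [T_\pm(\phi)]$ be the two projective fixed points of $\phi$ in $\CVNbar$. Whether $\phi$ is atoroidal (so that I can invoke Proposition~\ref{prop:LL+} from \cite{LL} and Theorem~\ref{main}) or not (so that I use Proposition~\ref{prop:toroidal} and the classical pseudo-Anosov dynamics on Thurston's boundary via \cite{BH92}), two facts are at my disposal: first, $\phi$ has uniform North-South dynamics on $\CVNbar$ with attractor $[T_+]$ and repeller $[T_-]$ (uniform convergence on compact subsets avoiding the repeller, and symmetrically for $\phi^{-1}$); second, the one-point stabilizers $\Stab_{\Out(\FN)}([T_+])$ and $\Stab_{\Out(\FN)}([T_-])$ are both virtually cyclic. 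Hence the setwise pair stabilizer $S := \Stab_{\Out(\FN)}(\{[T_+],[T_-]\})$, which contains $\Stab([T_+])\cap\Stab([T_-])$ as a subgroup of index at most $2$, is virtually cyclic too, and $\langle\phi\rangle \subseteq S$.

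Assume now that $G$ is not virtually cyclic. I would produce $g\in G$ satisfying
\[
\{g[T_+],\, g[T_-]\}\cap\{[T_+],[T_-]\} = \emptyset.
\]
Observe that the ``collision set'' $B := \{h\in G : h\{[T_+],[T_-]\}\cap\{[T_+],[T_-]\}\ne\emptyset\}$ is a union of at most four left cosets of the subgroups $G\cap\Stab([T_+])$ and $G\cap\Stab([T_-])$, one per matching pattern $h[T_\pm]\in\{[T_+],[T_-]\}$. If $B=G$, B.~H.~Neumann's coset covering lemma forces at least one of $G\cap\Stab([T_\pm])$ to have finite index in $G$; combined with the virtual cyclicity of $\Stab([T_\pm])$, this makes $G$ itself virtually cyclic, which is case~(1). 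Otherwise $g$ exists as required.

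For the ping-pong, note that the disjointness property of $g$ is symmetric under inversion, so $g^{-1}[T_+], g^{-1}[T_-], [T_+], [T_-]$ are four pairwise distinct points of $\CVNbar$. Choose pairwise disjoint open neighborhoods $U_\pm$ of $[T_\pm]$ and $W_\pm$ of $g^{-1}[T_\pm]$. Since $g^{-1}\phi g$ inherits uniform North-South dynamics with fixed points $g^{-1}[T_\pm]$, for all sufficiently large $M$ the map $\phi^{\pm M}$ sends $\CVNbar\smsm U_\mp$ into $U_\pm$, while $(g^{-1}\phi g)^{\pm M} = g^{-1}\phi^{\pm M}g$ sends $\CVNbar\smsm W_\mp$ into $W_\pm$. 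Classical ping-pong then gives that $\phi^M$ and $g^{-1}\phi^M g$ freely generate a rank-two free subgroup of $G$, establishing case~(2). The principal obstacle is the coset-covering reduction: to extract the dichotomy cleanly one must combine the finite number of collision patterns with Neumann's lemma and the virtual cyclicity of $\Stab([T_\pm])$ from Theorem~\ref{main} (or Proposition~\ref{prop:toroidal} in the non-atoroidal case); the rest is standard once uniform N-S dynamics is available.
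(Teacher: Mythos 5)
Your proof is correct and rests on the same two pillars as the paper's proof (Theorem~\ref{thm:TA} plus Proposition~\ref{prop:toroidal}): uniform North--South dynamics of the iwip $\phi$ on $\CVNbar$, and virtual cyclicity of the point stabilizers $\Stab_{\Out(\FN)}([T_\pm])$ coming from Theorem~\ref{main} in the atoroidal case and from Proposition~\ref{prop:toroidal} in the non-atoroidal one; the concluding ping-pong on four pairwise distinct fixed points is likewise the same (Corollary~\ref{cor:free}). The genuine departure is how you extract the dichotomy and produce the conjugating element $g$. The paper argues: if $G$ preserves the set $\{[T_+],[T_-]\}$ then $G$ lies inside the (virtually cyclic) pair stabilizer; otherwise some $g$ sends one of $[T_\pm]$ outside the pair, and one then needs a separate argument — using that $\langle\phi\rangle$ has finite index in each $\Stab([T_\pm])$ and that an iwip has exactly two fixed points in $\CVNbar$ — to verify that the other fixed point is also moved out, so that all four points $[T_\pm(\phi)],[T_\pm(g^{-1}\phi g)]$ are distinct. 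You replace this with a coset-covering argument: the ``collision set'' $B$ is covered by at most four left cosets of $G\cap\Stab([T_+])$ and $G\cap\Stab([T_-])$, so if $B=G$ then by B.~H.~Neumann's lemma one of those two subgroups (each virtually cyclic) has finite index in $G$, giving case~(1); and if $B\ne G$, a $g\notin B$ automatically pushes both $[T_+]$ and $[T_-]$ off the pair, and inversion-symmetry hands you four distinct points directly. This trades the paper's slightly fiddly extra distinctness claim for an appeal to a standard combinatorial lemma; both versions are valid, and yours is arguably a touch cleaner, but they are close cousins rather than different methods.
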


This result has been proved in \cite{BFH97}.  The general
case of the Tits alternative for subgroups of $\Out(F_N)$ has been
established by Bestvina, Feighn and Handel in a series of deep papers
\cite{BFH00,BFH05} using the improved relative train-track technology.

\smallskip
\noindent
{\em Acknowledgements:}

The authors are grateful to Arnaud Hilion, Chris Leininger
and Gilbert Levitt for useful conversations.

\section{Preliminaries}\label{sect:prelim}

An $\R$-tree $T$ is a path-connected non-empty metric space, such that
for any two points $x, y \in T$ there is a unique embedded arc $[x, y]
\subseteq T$ which joins $x$ to $y$, and this arc is isometric to the
interval $[0, d(x, y)] \subseteq \R$. All $\R$-trees in this paper are
equipped with a (left) isometric action of the free group $\FN$ of finite rank $N \geq 2$. Such an $\R$-tree $T$ is called {\em minimal} if there is no non-empty $\FN$-invariant subtree $T' \subseteq T$ different from $T$.

For any element $w \in \FN$ the {\em translation length} on $T$ is defined by
\[|| w ||_T = \inf_{x \in T} \{ d(x,wx)\}\ .\]
This infimum is always attained, and in the case where $||w||_T > 0$ the
set of points $x \in T$ which realize $d(x,wx) = || w ||_T$ is
isometric to $\R$ and is called the {\em axis} of $w$, denoted by
${\rm Ax}(w)$.  An element $w \in \FN$ (or more precisely: the isometry $T
\to T, x \mapsto w x$) is called {\em hyperbolic} if $||w||_T>0$ and
\emph{elliptic} if $||w||_T=0$.

An $\R$-tree $T$ with an isometric $F_N$-action is called {\em small} if for any $x \neq y$ in $T$ the stabilizer $\Stab_{\FN}([x,y]) \subseteq \FN$ is cyclic. The tree $T$ is {\em very small} if in addition no $w \in \FN$ inverts a non-degenerate segment or fixes a non-degenerate tripod in $T$.

The following is well known (see, for example, \cite{Pau}).

\begin{lem}\label{axis}
Let $T$ be an $\R$-tree equipped with a minimal nontrivial
(i.e. without a global fixed point) isometric action of $F_N$. Then
$T$ is equal to the union of the axes ${\rm Ax}(w)$ for all hyperbolic $w \in \FN$.
\qed
\end{lem}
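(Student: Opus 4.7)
The plan is to show that $U := \bigcup_{w \in \FN,\ \|w\|_T > 0} \mathrm{Ax}(w)$ is a non-empty $\FN$-invariant subtree of $T$, and then invoke minimality.

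First I would verify that $U$ is non-empty, i.e. that hyperbolic elements exist. Suppose every $w \in \FN$ is elliptic on $T$. A standard fact for isometric actions of a finitely generated group on an $\R$-tree (see, e.g., Culler--Morgan) asserts that if every element fixes some point and a suitable length-function condition holds, then in fact the whole group fixes a point; more concretely one can argue directly by induction on the number of generators, using that any two elliptic isometries whose fixed-point sets are disjoint would produce a hyperbolic product whose translation length is twice the distance between their fixed sets. Either conclusion contradicts minimality and non-triviality of the action (a global fixed point would force $T$ to be that single point, else the invariant subtree $\{*\}$ would violate minimality). Hence some $w \in \FN$ is hyperbolic and $U \neq \emptyset$.

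Next $U$ is $\FN$-invariant, since $g \cdot \mathrm{Ax}(w) = \mathrm{Ax}(gwg^{-1})$ for every $g \in \FN$. The heart of the argument is that $U$ is a subtree, i.e. connected. For this I would use the standard ``ping-pong'' geometry of hyperbolic isometries of $\R$-trees: given two hyperbolic elements $u, v \in \FN$, either $\mathrm{Ax}(u) \cap \mathrm{Ax}(v) \neq \emptyset$ (in which case the segment joining any two points of $\mathrm{Ax}(u) \cup \mathrm{Ax}(v)$ already lies in $U$), or else there is a unique bridge $[p,q]$ with $p \in \mathrm{Ax}(u)$, $q \in \mathrm{Ax}(v)$, and for all sufficiently large $n$ the element $u^n v^n$ is hyperbolic with axis containing $[p,q]$ together with arbitrarily long sub-segments of $\mathrm{Ax}(u)$ and $\mathrm{Ax}(v)$ on either side. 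Thus the bridge between the two axes also lies in $U$, which proves that $U$ is path-connected and hence a subtree.

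Finally, $U$ is a non-empty $\FN$-invariant subtree of $T$, so by minimality of the action we must have $U = T$. This is the desired conclusion. The only step that requires genuine care is the connectedness of $U$, i.e. the verification that two axes can always be bridged by the axis of some power-product element; the other steps are essentially formal consequences of minimality and conjugation.
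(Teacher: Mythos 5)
The paper offers no proof of this lemma: it is stated as ``well known'' with a \texttt{\textbackslash qed} and a reference to Paulin, so there is no internal argument to compare against. Your proof is correct and is essentially the standard one: form $U = \bigcup_w \mathrm{Ax}(w)$, show it is a non-empty $\FN$-invariant subtree, and invoke minimality. The $\FN$-invariance via $g\,\mathrm{Ax}(w) = \mathrm{Ax}(gwg^{-1})$ and the convexity via the bridge argument (two hyperbolic isometries with disjoint axes have a hyperbolic product whose axis contains the bridge, with $\|uv\|_T = \|u\|_T + \|v\|_T + 2d(\mathrm{Ax}(u),\mathrm{Ax}(v))$) are exactly right; note that $uv$ itself already does the job, so the passage to $u^n v^n$ for large $n$ is harmless but unnecessary.

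Two small remarks on the non-emptiness step. First, the hedge ``and a suitable length-function condition holds'' is not needed here: for a \emph{finitely generated} group acting by isometries on an $\R$-tree with every element elliptic, a global fixed point always exists (Serre's lemma, using that $\mathrm{Fix}(a)\cap\mathrm{Fix}(b)=\emptyset$ forces $ab$ hyperbolic, together with the Helly property for subtrees). Second, you should be a little careful with the phrase ``a global fixed point would force $T$ to be that single point'': strictly, a global fixed point contradicts the stated hypothesis of non-triviality directly; and if the action were trivial, minimality would indeed force $T$ to be a point, which is exactly the degenerate case the hypothesis excludes. So the logic is sound, but the two clauses (non-trivial, minimal) are doing slightly different work than your parenthetical suggests. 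These are cosmetic; the proof is correct.
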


The \emph{unprojectivized Outer space} $\cvn$ consists of all $\R$-trees equipped with a minimal free discrete isometric actions of $F_N$.
Two such trees are considered as equal if there exists an $F_N$-equivariant isometry between them. The closure $\cvnbar$ of $\cvn$ in the equivariant Gromov-Hausdorff convergence topology is known~\cite{CL,BF93} to consists of all very small minimal isometric actions of $F_N$ on $\mathbb R$-trees, where again two trees are considered to be equal if there exists an $F_N$-equivariant isometry between them. Although the trivial action of $F_N$ on a tree consisting of a single point can be realized as the limit of free and discrete $F_N$-trees, this action by convention is excluded from $\cvnbar$, so that all points of $\cvnbar$ represent non-trivial minimal actions of $F_N$.

There is a natural (right) action of $\Aut(\FN)$ on $\cvnbar$ that
leaves $\cvn$ invariant. Namely, for $\Phi\in \Aut(F_N)$ and $T\in
\cvnbar$, the point $T\Phi\in \cvnbar$ is defined as follows.
The tree  $T\Phi$ is equal to $T$ as a metric space, but the action of
$F_N$ is twisted via $\Phi$:
\[
w \underset{T\Phi}{\cdot} x:= \Phi(w) \underset{T}{\cdot} x \text{  for any } w\in \FN, x\in T.
\]

It is easy to see that $Inn(\FN)$ is contained in the kernel of this action of $\Aut(F_N)$ on $\cvnbar$ and therefore the action factors through to the action of $\Out(F_N)$ on $\cvnbar$ as follows: for $T\in \cvnbar$ and $\phi\in \Out(F_N)$ we have $T\phi:=T\Phi$ where $\Phi\in \Aut(F_N)$ is any representative of $\phi$.

The \emph{projectivization} $\CVNbar$ of $\cvnbar$ is defined as $\CVNbar=\cvnbar/\sim$, where for $T_1, T_2\in \cvnbar$ we have $T_1\sim T_2$ if there exists a constant $c>0$ such that $T_1=cT_2$. The latter condition means that there exists an $F_N$-equivariant isometry between $T_1$ and the tree $cT_2$, which is obtained from $T_2$ by multiplying the metric on $T_2$ by $c$, while using the same $F_N$-action as given on $T_2$. The $\sim$-equivalence class of $T\in \cvnbar$ is denoted by $[T]$.  The image of $\cvn$ in $\CVNbar$ under the canonical projection map is denoted by $\CVN$. Thus $\CVN$ is the projectivization of $\cvn$ and $\CVN=\{[T] \mid T\in \cvn\}$.

The actions of $\Aut(F_N)$ and $\Out(F_N)$ respect the $\sim$-equivalence relation and hence they quotient through to actions on $\CVNbar$: for $\phi\in \Out(F_N)$ and $[T]\in \CVNbar$ we have $[T]\phi:=[T\phi]$.

For $T\in \cvnbar$ and $x\in T$ we define the \emph{valence} $val(x)$  of $x$ to be the number of
connected components of $T \smallsetminus\{x\}$. These connected components
themselves are called \emph{directions} at $x$.
We can also think of a direction at $x$ as an equivalence
class of nondegenerate geodesic segments starting at $x$, where two
such segments are equivalent if they have an overlap of positive length.

The following is well known and follows directly from the definition of an $\R$-tree:

\begin{lem}
\label{common-segments}
Let $T\in \cvnbar$.
Then for any two points $y, y'$ contained in the same direction $d$ at
some point $x \in T$, the segments $[x, y]$ and $[x, y']$ intersect in
a non-degenerate segment
\[
[x, y] \cap [x, y'] = [x, z] \qquad {\rm with} \qquad  x \neq z.
\]
\qed
\end{lem}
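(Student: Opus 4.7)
The plan is to reduce the statement to the standard ``median point'' (or ``tripod center'') property of $\R$-trees. The key observation is that since $y$ and $y'$ lie in the same connected component $d$ of $T \smsm \{x\}$, and $d$ is path-connected by definition, there exists a continuous path $\gamma : [0,1] \to d$ from $y$ to $y'$ that entirely avoids $x$. First I would invoke the standard fact that in an $\R$-tree the image of any continuous path from $y$ to $y'$ contains the unique embedded arc $[y, y']$ (this follows by reparameterizing $\gamma$ to be injective, which in an $\R$-tree necessarily traces out $[y,y']$). Hence $[y, y']$ is disjoint from $x$, i.e., $x \notin [y, y']$.

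Next I would use the fact that for any three points $x, y, y'$ in an $\R$-tree there exists a unique \emph{median point} $z \in T$, characterized by the three decompositions
\[
[x,y] = [x,z] \cup [z,y], \quad [x,y'] = [x,z] \cup [z,y'], \quad [y,y'] = [y,z] \cup [z,y'],
\]
all as concatenations of arcs meeting only at $z$. From this median decomposition it follows directly that
\[
[x,y] \cap [x,y'] = [x,z],
\]
since a point on $[x,y]$ lies on $[x,y']$ precisely when it lies on the common initial subsegment $[x,z]$.

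Finally, I would rule out the degenerate case $z = x$. Since $z \in [y,y']$ by the median decomposition, if $z = x$ we would conclude $x \in [y, y']$, contradicting the first paragraph. Therefore $z \neq x$, and $[x,y] \cap [x,y'] = [x,z]$ is a non-degenerate segment, as required.

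The only mildly delicate point is the assertion that the arc $[y,y']$ is contained in $d$, equivalently that $x \notin [y,y']$; everything else is a formal consequence of the median property. I do not anticipate any serious obstacle, as both the existence of the median point and the ``image of a path contains the arc'' property are standard in the theory of $\R$-trees and follow directly from the definition recalled at the start of Section~\ref{sect:prelim}.
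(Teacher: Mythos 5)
Your proof is correct. The paper states Lemma~\ref{common-segments} without proof, simply asserting that it is ``well known and follows directly from the definition of an $\R$-tree,'' so there is no argument in the paper to compare against. Your route --- reducing to the median (Gromov product) point $z$ of the tripod $\{x, y, y'\}$, so that $[x,y] \cap [x,y'] = [x,z]$, and then observing that $z \in [y,y']$ while $x \notin [y,y']$ because $y, y'$ lie in the same component of $T \smallsetminus \{x\}$ --- is exactly the standard way to justify this fact, and the ``delicate point'' you flag (that $[y,y']$ stays in $d$) is indeed the whole content: it is equivalent to saying that two points of $T\smallsetminus\{x\}$ lie in the same component if and only if $x$ is not on the geodesic joining them. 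Your appeal to path-connectedness of $d$ is fine since $\R$-trees are locally path-connected, so components of the open set $T\smallsetminus\{x\}$ are path-connected; alternatively one can argue directly that if $x\in[y,y']$ then $[y,y']\smallsetminus\{x\}$ already separates $y$ from $y'$ in $T\smallsetminus\{x\}$.
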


 Note that for every $x\in T$ we have $val(x)\ge 2$. Indeed, if $val(x)=1$, then we can remove the $F_N$-orbit of $x$ from $T$ to get a proper $F_N$-invariant subtree, contradicting the minimality of the action of $F_N$ on $T$.
We say that $x\in T$ is a \emph{branch point} if $val(x)\ge 3$.

Note that for $T\in \cvnbar$, the group $F_N$ acts on the set of branch points and on the set of directions at branch points in $T$.
We will need the following useful fact:

\begin{thm} [Gaboriau-Levitt \cite{GL}]
\label{GL}
Let $T \in \cvnbar$ be arbitrary. Then there are finitely many $\FN$-orbits of branch points, and
only finitely many $Stab_{\FN}(Q)$-orbits of directions at any branch point $Q$. In particular, there are only finitely many $\FN$-orbits of directions at branch points.
\end{thm}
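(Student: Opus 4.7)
The plan is to derive the theorem from the Gaboriau--Levitt \emph{geometric index} inequality. For each $Q \in T$ define the local index
\[
\mathrm{ind}(Q) = 2\,\mathrm{rank}(\mathrm{Stab}_{F_N}(Q)) + \#\bigl(\pi_0(T\smsm\{Q\})/\mathrm{Stab}_{F_N}(Q)\bigr) - 2,
\]
so that at a point with trivial stabilizer one recovers $\mathrm{val}(Q)-2$. Call $Q$ index-positive if $\mathrm{ind}(Q) > 0$ and set $\mathrm{ind}(T) = \sum_{[Q]} \mathrm{ind}(Q)$, summed over $F_N$-orbits of index-positive points. The key inequality to establish is $\mathrm{ind}(T) \le 2N-2$; once this is in hand, the finiteness conclusions of the theorem are automatic, as I explain at the end.

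I would first prove this bound for the dense set of simplicial trees. For $T \in \cvn$, where the action is free and discrete, the quotient graph $T/F_N$ is a finite graph with $\pi_1 = F_N$, so $\chi(T/F_N) = 1-N$, and a direct Euler-characteristic count gives $\sum_{[v]}(\mathrm{val}(v)-2) = -2\chi(T/F_N) = 2N-2$. For simplicial $T \in \cvnbar$ with possibly non-trivial vertex stabilizers, the quotient is a graph of groups whose fundamental group is $F_N$, and a Bass--Serre computation at each vertex of rank $r$ with $k$ incident (orbits of) edges gives the contribution $2r+k-2$; an analogous summation produces the same bound $2N-2$ using that vertex groups of a very small tree are finitely generated free subgroups of $F_N$.

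For a general $T \in \cvnbar$ I would invoke that $\cvnbar$ is the closure of $\cvn$ in the equivariant Gromov--Hausdorff topology, together with lower semi-continuity of $\mathrm{ind}$ under this convergence. Concretely, pick any finite collection of orbits of index-positive points of $T$ together with finitely many orbits of directions at each; represent each direction by a short embedded segment $[Q, Q']$, and exhibit for all sufficiently large $n$ corresponding configurations in approximating simplicial $T_n \to T$ whose local indices sum to at least the chosen partial sum for $T$. Taking a limsup one obtains $\mathrm{ind}(T) \le \liminf_n \mathrm{ind}(T_n) \le 2N-2$. This approximation step, establishing that branch-point and direction data in the limit are ``seen'' by the approximants with compatible stabilizer information, is the main obstacle and the most delicate part of the argument.

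Given the index bound, the finiteness conclusions follow at once. Every branch point $Q$ satisfies $\mathrm{ind}(Q) \ge 1$: if $\mathrm{Stab}_{F_N}(Q)$ is trivial then $\mathrm{ind}(Q) = \mathrm{val}(Q) - 2 \ge 1$, while if $\mathrm{Stab}_{F_N}(Q)$ has rank $r \ge 1$ then the presence of at least one direction orbit gives $\mathrm{ind}(Q) \ge 2r + 1 - 2 \ge 1$. Hence $T$ has at most $2N-2$ orbits of branch points. Moreover, at any single branch point $Q$ with $\mathrm{rank}(\mathrm{Stab}_{F_N}(Q)) = r$, the number of $\mathrm{Stab}_{F_N}(Q)$-orbits of directions equals $\mathrm{ind}(Q) + 2 - 2r \le 2N$, hence finite, which yields the final ``in particular'' statement.
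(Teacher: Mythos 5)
This theorem is not proved in the paper you were given: it is quoted verbatim as a result of Gaboriau and Levitt (their paper \emph{The rank of actions on $\R$-trees}), with the index bound of Corollary~\ref{GL1} also attributed to them. So there is no ``paper's own proof'' to compare against; what follows is an assessment of your argument on its own terms.

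Your identification of the geometric index as the key tool is correct, and so is the logical skeleton: if one grants
$\mathrm{ind}(T) \le 2N-2$, then (since every branch point contributes at least $1$ to the index, as you check case by case) the number of $\FN$-orbits of branch points is at most $2N-2$, and at each branch point $Q$ the number of $\Stab_{\FN}(Q)$-orbits of directions is $\mathrm{ind}(Q) + 2 - 2\,\mathrm{rank}(\Stab_{\FN}(Q)) \le 2N$; finally, two directions at the same $Q$ are in the same $\FN$-orbit iff they are in the same $\Stab_{\FN}(Q)$-orbit, so the ``in particular'' clause follows by summing over orbits of branch points. This deduction of Theorem~\ref{GL} (and of Corollary~\ref{GL1}) from the index bound is complete and correct.

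The gap is in the index bound itself, which is essentially the entire content of the Gaboriau--Levitt theorem. Your third step asserts that $\mathrm{ind}$ is lower semicontinuous along equivariant Gromov--Hausdorff approximations $T_n \to T$ with $T_n \in \cvn$, so that $\mathrm{ind}(T) \le \liminf \mathrm{ind}(T_n) = 2N-2$. This is not established by what you write, and it is genuinely delicate: in the approximants from $\cvn$ all point stabilizers are trivial, so the $2\,\mathrm{rank}(\Stab_{\FN}(Q))$ term in a limit point's index has no literal counterpart in $T_n$ and must be extracted from the collapsing dynamics (elements with translation length $\to 0$). Exhibiting, from short segments $[Q,Q']$ representing direction germs of $T$, configurations in $T_n$ whose local indices sum to at least the chosen partial index of $T$ --- with the stabilizer contribution correctly accounted for --- is exactly the hard technical core of the theorem, and you acknowledge it without giving a strategy. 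The actual Gaboriau--Levitt argument does not proceed by naive limits from simplicial trees; it uses the theory of geometric actions (finite systems of isometries, foliated $2$-complexes, and the Rips machine), approximating $T$ by geometric $\FN$-trees in a controlled way for which the index can be computed directly. So while your overall framework is the right one, the proof as written defers the substantive step to an unproven semicontinuity claim.
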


The result of Gaborau-Levitt is actually much more specific, in that it gives an upper bound formula in terms of what they call the {\em index} of $T$. For the case where the $\FN$-action on $T$ is free, this formula reduces to the following:

\begin{cor} [Gaboriau-Levitt \cite{GL}]
\label{GL1}
Let $T \in \cvnbar$ be with free $\FN$-action. Then the following holds:
\begin{enumerate}
\item
The number of $\FN$-orbits of branch points is bounded above by $2N-2$.
\item
For each branch point $Q \in T$, the number of directions at $Q$ is bounded above by $2N$.
\item
The total number of $\FN$-orbits of directions at branch points in $T$ is bounded above by $6N-6$.
\end{enumerate}
\end{cor}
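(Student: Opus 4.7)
The plan is to deduce all three bounds from the general Gaboriau--Levitt index formula stated (in the form applicable here) in Theorem~\ref{GL}, specialized to the case of a free $\FN$-action. Recall that Gaboriau and Levitt define an \emph{index} $i(T)$ that sums, over the finitely many $\FN$-orbits of branch points $[Q]$, the quantity $i(Q)-2$, where $i(Q)$ denotes the number of $\Stab_{\FN}(Q)$-orbits of directions at $Q$, plus a contribution coming from $\FN$-conjugacy classes whose centralizer has rank $\geq 2$ acting with nontrivial fixed subtree. For $T$ with free $\FN$-action this second contribution is zero, and moreover $\Stab_{\FN}(Q)=\{1\}$ at every branch point, so $i(Q)$ simply coincides with $\val(Q)$. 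The key input from \cite{GL} is then the inequality
\[
\sum_{[Q]} (\val(Q)-2) \;\leq\; 2N-2,
\]
where the sum ranges over all $\FN$-orbits of branch points of $T$.

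First I would establish (1). Since every branch point $Q$ of $T$ satisfies $\val(Q)\geq 3$, each summand above is at least $1$, and hence the total number of $\FN$-orbits of branch points is bounded by $2N-2$. Next, (2) follows because a single orbit $[Q]$ already contributes $\val(Q)-2$ to the index, so $\val(Q)-2\leq 2N-2$ and therefore $\val(Q)\leq 2N$ for any branch point $Q$.

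For (3) I would count total $\FN$-orbits of directions as
\[
\sum_{[Q]} \val(Q) \;=\; \sum_{[Q]} \bigl(\val(Q)-2\bigr) \;+\; 2\cdot\#\{\FN\text{-orbits of branch points}\}
\;\leq\; (2N-2) + 2(2N-2) \;=\; 6N-6,
\]
using (1) for the second term and the Gaboriau--Levitt inequality for the first. Here, because the action is free, each $\FN$-orbit of directions at branch points is in bijection with the set of directions at a chosen representative $Q$ of the corresponding orbit, summed over orbits of $Q$, so the left-hand side does indeed count $\FN$-orbits of directions.

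The only real obstacle is to quote the Gaboriau--Levitt index formula in the correct form and to verify its simplification in the free case; once that is done the three bounds are immediate arithmetic consequences, as above. No new tree-theoretic machinery is required beyond what is already recalled in Theorem~\ref{GL} and Lemma~\ref{common-segments}.
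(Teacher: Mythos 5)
Your derivation is correct, and it is precisely the intended specialization that the paper leaves implicit when it says the general Gaboriau--Levitt index bound ``reduces to the following'' in the free case: with $\Stab_{\FN}(Q)=\{1\}$ at every branch point, their index inequality becomes $\sum_{[Q]}(\val(Q)-2)\leq 2N-2$, and your three bounds are exactly the arithmetic consequences of that inequality together with the observation $\val(Q)\geq 3$. Your justification of the bijection used in (3) — that freeness of the action forces $\Stab(Q)$-orbits of directions to be singletons, so $\FN$-orbits of directions over the orbit of $Q$ are in bijection with directions at a single representative $Q$ — is the one point that genuinely needed a sentence, and you supplied it correctly.
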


\section{Stretching factors, homotheties and eigenrays}
\label{sect:eigenrays}

In this section we present some of the basics of $\R$-trees with
isometric action of a free group $\FN$ of finite rank $N \geq 2$. The
material of this section is known to the experts, but it is a little
scattered in the literature (see e.g. \cite{GJLL, Lu2}).


Recall here that a {\em homothety with stretching factor $\lambda >
  0$} is a bijection between $H:T\to T'$ metric spaces $T$ and $T'$
which satisfies $d(Hx,Hy)=\lambda d(x,y)$ for any $x,y\in T$.

\smallskip
\begin{defn}[Stretching factors and homotheties]
\label{s-f-a-h}
Let $T\in \cvnbar$ and let $\Phi\in \Aut(F_N)$ be such that $[T]\Phi=[T]$, or equivalently,
$\Phi\in \Stab_{Aut(\FN)}([T])$. Thus for some
$\lambda=\lambda(\Phi)>0$, called the \emph{stretching factor of
  $\Phi$}, there exists an $F_N$-equivariant isometry between the
trees $\lambda T$ and $T\Phi$.

By definition of $T\Phi$ this means that there is
 a homothety $H=H_\Phi: T\to T$ with stretching factor $\lambda$,
such that
\[
H(wx)=\Phi(w) H(x) \text{ for any } x\in T, w\in \FN \tag{$\dag$}.
\]

In this case we say that $H$ is a \emph{homothety of $T$
representing $\Phi$}.

If $\phi\in \Out(F_N)$ is such that $[T]\phi=[T]$, if $\Phi\in
\Aut(F_N)$ is a lift of $\phi$ to $\Aut(F_N)$ and if $H:T\to T$ is a
homothety of $T$  representing $\Phi$, we will also say that $H$ is a \emph{homothety of $T$
representing $\phi$}. We also put $\lambda(\phi)=\lambda(\Phi)$ in
this case and call $\lambda(\phi)$ the \emph{stretching factor of $\phi$}.
\end{defn}

If $T\Phi=\lambda_1T=\lambda_2T$ then $\lambda_1=\lambda_2$. The
reason is that in this case, at the level of translation length
functions, one has $||\/ \cdot
\/||_{T\Phi}=\lambda_1||\cdot||_T=\lambda_2|| \cdot ||_T$ on
$F_N$. Thus for $\Phi\in \Stab_{Aut(\FN)}([T])$ the stretching factor
$\lambda(\Phi)$ is well defined. It is also easy to see that for
$\phi\in \Stab_{Out(F_N)}([T])$
the stretching factor $\lambda(\phi)$ is well-defined.

Note that a homothety of $T$ with stretching factor $\lambda = 1$ is an isometry of $T$.

The following is well known:

\begin{lem}\cite{GJLL}
\label{dense-branch-points}
 Let $T \in \cvnbar$ and let $H: T \to T$ be a homothety with stretching factor $\lambda \ne 1$
which represents some automorphism of $\FN$. Then:
\begin{enumerate}
\item The branch points are dense in $T$ (in fact, for every branch point its $F_N$-orbit is dense in $T$).
\item The stabilizer in $\FN$ of any non-degenerate segment is trivial.
\end{enumerate}

\end{lem}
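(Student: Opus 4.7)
After replacing $H$ by $H^{-1}$ if necessary, I may assume $\lambda > 1$. The first observation is that $T$ is non-simplicial: otherwise $T$ would have finitely many $\FN$-orbits of edges and hence a finite set of edge lengths, and $H$ would permute the orbits while scaling each edge length by $\lambda$, so a power $H^m$ fixing each orbit individually would force $\lambda^m = 1$, contradicting $\lambda>1$. Moreover $T$ is not a line, because a line action of $\FN$ for $N \ge 2$ factors through an abelian quotient whose non-cyclic kernel would fix every arc and thus violate the cyclic-arc-stabilizer condition of very small trees. For part~(1), I would then invoke the standard dichotomy for minimal very small $\FN$-trees (a consequence of the Rips machine; see \cite{GJLL} and the references therein): such a tree is either simplicial or has all $\FN$-orbits dense in $T$. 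Since $T$ is non-simplicial and contains at least one branch point, the $\FN$-orbit of every branch point is then dense in $T$, proving~(1).

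For part~(2), I would assume for contradiction that a non-trivial $w\in\FN$ fixes a non-degenerate arc $[a,b]$ of length $L>0$. The equivariance $H^n(wx)=\Phi^n(w)H^n(x)$ shows that $\Phi^n(w)$ fixes the arc $H^n([a,b])$ of length $\lambda^nL\to\infty$. Because $H$ permutes the finitely many $\FN$-orbits of branch points (Theorem~\ref{GL}), some power $H^m$ preserves the $\FN$-orbit of a branch point $Q\in(a,b)$ provided by~(1); writing $H^m(Q)=gQ$ with $g\in\FN$, the composition $K:=g^{-1}\circ H^m$ is a homothety of $T$ with stretching factor $\lambda^m>1$ fixing $Q$, and it represents an automorphism of $\FN$ preserving $\Stab_{\FN}(Q)$. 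Iterating $K$ while tracking its action on the finitely many $\Stab_{\FN}(Q)$-orbits of directions at $Q$, one promotes the information that the $\Phi^n(w)$ fix longer and longer arcs into a single non-trivial element $w'\in\FN$ whose fixed set contains an entire ray $\rho$ emanating from $Q$. Using Lemma~\ref{axis} to produce a hyperbolic $v\in\FN$ whose axis overlaps $\rho$ in a non-degenerate sub-arc, a direct computation shows that the commutator $[v,w']$ is elliptic and fixes a non-degenerate arc on $\rho$; a short algebraic calculation inside the free group $\FN$ then rules out the rank-two abelian (or Klein-bottle-type) relation that this would otherwise force, yielding the contradiction.

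The main obstacle is precisely the promotion step in~(2): producing a single element $w'\in\FN$ with an unbounded fixed set from the data that the various elements $\Phi^n(w)$ fix longer and longer arcs. This relies crucially on the finite orbit structure provided by Theorem~\ref{GL} together with careful bookkeeping of the $K$-action on the directions at $Q$, and is where most of the work of proving~(2) concentrates. Part~(1), by contrast, becomes essentially immediate once the non-simplicial-implies-dense-orbits dichotomy for minimal very small $\FN$-trees is accepted.
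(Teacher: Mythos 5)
The paper does not prove Lemma~\ref{dense-branch-points}: it cites \cite{GJLL} and treats the statement as known, so there is no proof in the paper to compare against. Evaluating your argument on its own terms, the two parts fare quite differently.

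For part~(1), the core mechanism is right and is the standard one: the $H$-orbit of a simplicial edge would produce $\FN$-orbits of edges of pairwise distinct lengths, contradicting finiteness of the set of such orbits. But the dichotomy you invoke -- that a minimal very small $\FN$-tree is either simplicial or has all orbits dense -- is false as stated. A graph of actions over the Bass--Serre tree of a free splitting, with dense-orbit vertex trees, gives minimal very small trees that are neither simplicial nor have dense orbits. What you actually need is Levitt's decomposition (\cite{Le}): a minimal very small $\FN$-tree has a canonical simplicial part with finitely many $\FN$-orbits of edges, and if that part is empty then all orbits are dense. Your edge-length scaling argument, applied to the simplicial part of the decomposition rather than to $T$ itself, then rules out the simplicial part and yields the conclusion. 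This is a repairable but genuine misstatement.

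Part~(2) is where the real gap lies, and you flag it yourself. The ``promotion step'' -- producing a single element $w'\in\FN$ whose fixed set contains an entire ray, from the observation that the \emph{distinct} elements $\Phi^n(w)$ fix longer and longer arcs -- is left entirely unexplained, and I do not see how it could work: there is no mechanism forcing those distinct group elements to cohere into a single one with a large fixed set. A route that actually parallels part~(1) is more promising: by the very-small (no fixed tripod) condition, $\mathrm{Fix}(w)$ is an arc; the collection of maximal fixed arcs of nontrivial elements has only finitely many $\FN$-orbits (this is part of the Gaboriau--Levitt index finiteness underlying Theorem~\ref{GL}); $H$ permutes these orbits while scaling lengths by $\lambda$, so a power fixing each orbit again forces $\lambda^m=1$ unless every such arc has length zero or infinity, and the unbounded case is then excluded separately using the tripod condition together with part~(1). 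As written, your part~(2) is an outline of an approach whose central step is both unproved and, to my eye, unlikely to succeed, so it needs to be reworked along the lines just indicated rather than merely filled in.
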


\begin{cor}\label{cor:dense}
Let $T \in \cvnbar$ and $H: T \to T$ be as in
Lemma~\ref{dense-branch-points}. Then for any $x\ne y$ in $T$ the
branch points of $T$ are dense in $[x,y]$.
\end{cor}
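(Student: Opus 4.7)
The plan is to fix an arbitrary nondegenerate subsegment $[a,b] \subseteq [x,y]$ and produce a branch point inside $[a,b]$. Lemma~\ref{dense-branch-points}(1) tells us that branch points are dense in the metric space $T$, but this is a statement about density in $T$, not a priori in $[x,y]$. The bridge between the two will be the standard nearest-point projection onto $[a,b]$.

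First I would pick an interior point $z$ of $[a,b]$ and set $r = \min\{d(z,a), d(z,b)\} > 0$. By Lemma~\ref{dense-branch-points}(1), choose a branch point $Q \in T$ with $d(Q,z) < r/3$. In an $\R$-tree there is a well-defined center $p$ of the tripod on $\{Q,a,b\}$: the unique point with $[Q,p] = [Q,a]\cap[Q,b]$ and $p \in [a,b]$, satisfying $d(Q,a) = d(Q,p) + d(p,a)$ and likewise for $b$. In particular $d(p,Q) \le d(z,Q) < r/3$, so $d(p,z) < 2r/3 < r$, which forces $p$ to lie in the open interior of $[a,b]$.

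Now I distinguish two cases. If $p = Q$, then $Q$ itself is a branch point lying in $[a,b]$, and we are done. If $p \neq Q$, I claim $p$ is a branch point of $T$. The directions at $p$ toward $a$, toward $b$, and toward $Q$ must all be distinct: the first two are distinct since $p$ is interior to $[a,b]$, and the direction toward $Q$ differs from either of them because $[p,Q]\cap[a,b] = \{p\}$ by the defining property of the tripod center (any common initial subsegment would contradict the characterization $[Q,p] = [Q,a]\cap[Q,b]$). Hence $val(p) \ge 3$, so $p$ is a branch point of $T$ lying in $[a,b]$.

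Since $[a,b] \subseteq [x,y]$ was an arbitrary nondegenerate subsegment, this shows branch points are dense in $[x,y]$. There is no serious obstacle here; the only subtle point is the projection-to-segment trick, which reduces density in $T$ to density in any given segment — a standard and clean feature of $\R$-trees.
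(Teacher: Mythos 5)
Your proof is correct and follows essentially the same route as the paper's: use Lemma~\ref{dense-branch-points} to find a branch point of $T$ near an interior point of the segment, then take the nearest-point projection (your tripod center $p$) onto the segment and observe that it is either the branch point itself or, having three distinct directions, a new branch point. The paper's version is slightly terser (it does not spell out why the projected point has valence $\ge 3$), but the argument is the same.
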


\begin{proof}
Let $x\ne y$ be two points in $T$. Let $z\in [x,y]$ such that $z\ne
x$, $z\ne y$ be arbitrary. We claim that there exist branch points of
$T$ in $[x,y]$ that are arbitrary close to $z$.

Let $\epsilon>0$  be any such that
$10\epsilon<\min\{d(z,x),d(z,y)\}$. By Lemma~\ref{dense-branch-points}
there exists a branch point $q$ of $T$ such that $d(z,q)\le
\epsilon$. If $q\in [x,y]$, we are done. If $q\not\in [x,y]$, let
$q'\in [x,y]$ be the nearest point projection of $q$ to $[x,y]$. By
the choice of $\epsilon$ and of $q$ we see that $q'\ne x$, $q'\ne y$,
$d(q',z)\le \epsilon$ and that $q'$ is a branch point of $T$. This
establishes the claim and completes the proof.
\end{proof}

The following is essentially an immediate corollary of the definitions:

\begin{lem}\label{lem:basic}
For any $T \in \cvnbar$ we have:
\begin{enumerate}
\item Let $\Phi_1,\Phi_2\in \Stab_{Aut(\FN)}([T])$ and let $H_1,H_2$ be homotheties of $T$ representing $\Phi_1$ and $\Phi_2$ accordingly. Then $H_1H_2$ is a homothety representing $\Phi_1\Phi_2$ and thus $\lambda(\Phi_1\Phi_2)=\lambda(\Phi_1)\lambda(\Phi_2)$.

 \item Suppose $H$ is a homothety representing $\Phi\in \Stab_{Aut(\FN)}([T])$. Then for any $u\in F_N$ the homothety $uH$ represents
 $\Phi_1= I_u \circ \, \Phi\in \Stab_{Aut(\FN)}([T])$,  where
 $I_u \in Inn(\FN)$ is the inner automorphism defined by $I_u(w)=u w u^{-1}$ for every $w\in F_N$.

 \item Let $\Phi\in \Stab_{Aut(\FN)}([T])$ and let $H$ be a homothety representing $\Phi$. Then $H^{-1}$ is a homothety representing $\Phi^{-1}$.
 \end{enumerate}
 \qed
\end{lem}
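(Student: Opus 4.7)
The plan is to verify each of the three assertions by a direct substitution into the equivariance identity $(\dag)$ defining a representing homothety, together with the elementary fact that a composition of two homotheties of $T$ with stretching factors $\lambda_1$ and $\lambda_2$ is again a homothety with stretching factor $\lambda_1 \lambda_2$, and that the inverse of a homothety with stretching factor $\lambda$ is a homothety with stretching factor $1/\lambda$. Since left translation by any $u \in F_N$ is an $F_N$-equivariant isometry of $T$, composing a homothety with such a translation yields a homothety with the same stretching factor.

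For part (1), I would write, for $w \in F_N$ and $x \in T$,
\[
(H_1 H_2)(wx) = H_1\bigl(\Phi_2(w) H_2(x)\bigr) = \Phi_1(\Phi_2(w)) H_1(H_2(x)) = (\Phi_1 \Phi_2)(w) \,(H_1 H_2)(x),
\]
which shows $H_1 H_2$ satisfies $(\dag)$ for $\Phi_1 \Phi_2$. The stretching factor of $H_1 H_2$ equals $\lambda(\Phi_1)\lambda(\Phi_2)$ by the remark above, and since the stretching factor of a representing homothety is uniquely determined by the class $[T]$ (as already observed in the paragraph following Definition~\ref{s-f-a-h}), this gives $\lambda(\Phi_1 \Phi_2) = \lambda(\Phi_1)\lambda(\Phi_2)$.

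For part (2), I would observe that $uH$ is a homothety of $T$ with the same stretching factor as $H$, and compute
\[
(uH)(wx) = u \Phi(w) H(x) = \bigl(u \Phi(w) u^{-1}\bigr) \cdot u H(x) = (I_u \circ \Phi)(w) \,(uH)(x),
\]
so that $uH$ represents $I_u \circ \Phi$ as claimed. For part (3), given that $H$ satisfies $H(wx) = \Phi(w) H(x)$, I would set $y = H(x)$ and rewrite this as $H^{-1}(\Phi(w) y) = w \, H^{-1}(y)$; substituting $v = \Phi(w)$ yields $H^{-1}(vy) = \Phi^{-1}(v)\, H^{-1}(y)$, which is precisely $(\dag)$ for $H^{-1}$ and $\Phi^{-1}$.

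There is no real obstacle here: the lemma is a formal consequence of the definition, and the only point requiring any care is keeping track of the order of composition in the relation $(\dag)$ so that the inner automorphism in part (2) comes out as $I_u$ rather than $I_{u^{-1}}$, and so that the equivariance for $\Phi^{-1}$ in part (3) is recovered on the correct side.
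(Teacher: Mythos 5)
Your verification is correct and is exactly the direct computation the paper has in mind: the paper introduces the lemma with ``The following is essentially an immediate corollary of the definitions'' and simply closes it with a $\square$ without writing out the manipulations, which are the ones you supply. Your care with the order of composition in part~(2), yielding $I_u$ rather than $I_{u^{-1}}$, and with recovering the equivariance relation on the correct side in part~(3), is precisely where the only potential slip lies, and you handle both correctly.
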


Lemma~\ref{lem:basic} implies that
\[
\lambda: \Stab_{Aut(\FN)}([T]) \to \mathbb R_{>0}
\]
is a homomorphism
to the multiplicative group $\R_{>0}$,
and that for every $\Phi\in Inn(F_N)$ we have $\lambda(\Phi)=1$.

Thus considered as a function on $\Stab_{Out(\FN)}([T])$, the stretching
factor map
\[
\lambda: \Stab_{Out(\FN)}([T]) \to \mathbb R_{>0}
\]
is also a homomorphism.

It is easy to see that any homothety $H$ representing some $\Phi\in \Stab_{Aut(\FN)}([T])$ takes geodesic segments to geodesic segments, preserves valences of points of $T$, takes branch points to branch points and directions at branch points to directions at branch points. Moreover, equation $(\dag)$ of Definition \ref{s-f-a-h}
implies that $H$ acts by permutations on $F_N$-orbits of branch points and on $F_N$-orbits of directions at branch points.

\begin{lem}\label{fix}
Let $T\in \cvnbar$ and let $H$ be a homothety representing some $\Phi\in \Stab_{Aut(\FN)}([T])$ such that $H$ preserves every $F_N$-orbit of branch points and every $F_N$-orbit of directions at branch points.
Suppose $H$ fixes a branch point $x$ of $T$.

If the $\FN$-action on $T$ is free, then
$H$ fixes every direction at $x$.
\end{lem}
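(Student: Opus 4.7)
The plan is to unpack the two hypotheses — that $H$ preserves each $F_N$-orbit of directions at branch points, and that the $F_N$-action on $T$ is free — and observe that they combine to force each direction at the fixed branch point $x$ to be a fixed point of the induced permutation of directions at $x$.

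First, I would note that since $H\colon T\to T$ is a bijective homothety with $H(x)=x$, it permutes the connected components of $T\smallsetminus\{x\}$, i.e.\ it induces a permutation on the (finitely many, by Corollary~\ref{GL1}) directions at $x$. Let $d$ be an arbitrary direction at $x$; then $H(d)$ is again a direction at $H(x)=x$.

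Next, by the hypothesis that $H$ preserves the $F_N$-orbit of every direction at a branch point, there exists $w\in F_N$ with $H(d)=w\cdot d$. Here $w\cdot d$ is by definition a direction at the branch point $w\cdot x$, while $H(d)$ is a direction at $x$; since a direction is attached to a unique branch point, this forces $w\cdot x=x$. The freeness of the $F_N$-action on $T$ then gives $w=1$, so $H(d)=d$. As $d$ was an arbitrary direction at $x$, this proves the lemma.

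There is essentially no obstacle here beyond correctly parsing the setup: the content of the argument is just the observation that freeness eliminates the stabilizer of $x$ in $F_N$, so an $F_N$-orbit can meet the set of directions at $x$ in at most one element, which forces the permutation induced by $H$ on the directions at $x$ to be trivial. No properties of the stretching factor $\lambda(\Phi)$ are used, and the finiteness statements of Theorem~\ref{GL} and Corollary~\ref{GL1} are only invoked implicitly to justify talking about permutations of the (finite) set of directions at $x$.
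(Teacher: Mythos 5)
Your proof is correct and follows the same route as the paper's: write $H(d)=w\cdot d$ using the orbit-preservation hypothesis, deduce $w\cdot x=x$ since a direction determines its basepoint, and conclude $w=1$ from freeness so $H(d)=d$. The remark about finiteness of the set of directions is harmless but not needed for the argument.
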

\begin{proof}
Suppose $d$ is a direction at $x$.
Since $H$ fixes the $\FN$-orbit of every direction in $T$, we have
$Hd=ud$ for some $u\in F_N$. Hence $ux=x$ and therefore $u=1$, since the action of $F_N$ on $T$ is free. Thus $Hd=d$, as required.
\end{proof}

\begin{lem}\label{key1}
Let $T \in \cvnbar$, and
let $H$ be a homothety with streching factor $1$ (that is
an isometry of $T$)  that represents
$\Phi\in Stab_{\Aut(F_N)}([T])$.
Then the following hold:

\begin{enumerate}
\item
If $\Phi = \Id_{\FN} \in \Aut(\FN)$, then $H=\Id_T$.
\item If $H$ represents $I_u \in Inn(F_N)$, then $H(x)=ux$ for every $x\in T$.

\end{enumerate}

\end{lem}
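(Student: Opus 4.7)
The plan is to prove (1) directly via a fixed-point argument, and then deduce (2) as a short consequence. For (2), set $H'(x):=u^{-1}H(x)$: left multiplication by $u^{-1}$ is an isometry of $T$ and $H$ is an isometry by assumption, so $H'$ is an isometry. The computation
\[
H'(wx)\;=\;u^{-1}H(wx)\;=\;u^{-1}(uwu^{-1})H(x)\;=\;w\,u^{-1}H(x)\;=\;wH'(x)
\]
shows that $H'$ represents $\Id_{F_N}$. Applying~(1) to $H'$ gives $H'=\Id_T$, which is exactly $H(x)=ux$ for every $x\in T$.

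For (1), since $\Phi=\Id_{F_N}$, equation~$(\dag)$ says exactly that $H$ is $F_N$-equivariant. Consider the fixed-point set $\mbox{Fix}(H)\subseteq T$: it is closed and convex because $H$ is an isometry, and $F_N$-invariant because $H$ is equivariant, so it is an $F_N$-invariant subtree of $T$. By minimality of $T$ it therefore suffices to show $\mbox{Fix}(H)\ne\emptyset$, for then $\mbox{Fix}(H)=T$ and $H=\Id_T$.

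To produce a fixed point, pick any hyperbolic $w\in F_N$. Equivariance implies $H(\Ax(w))=\Ax(w)$, and since $H$ commutes with $w$, the restriction $H|_{\Ax(w)}$ must be a translation of the line $\Ax(w)\cong\R$ (a reflection would anti-commute with the non-trivial translation by $w$). Assuming for contradiction that $\mbox{Fix}(H)=\emptyset$, every such translation is non-trivial. Now choose two non-commuting hyperbolic elements $w_1,w_2\in F_N$; such elements exist because $T\in\cvnbar$ with $N\ge 2$ cannot be a line, since a line action of $F_N$ would force arc stabilizers to be non-cyclic, violating the very small condition. Since $T$ is very small and $\langle w_1,w_2\rangle$ is non-abelian, the intersection $I:=\Ax(w_1)\cap\Ax(w_2)$ is a bounded (possibly empty or single-point) segment, and $H$, preserving both axes, must preserve either $I$ or the unique bridge between the two axes when $I=\emptyset$. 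A short case analysis---$I$ empty, a single point, or a non-degenerate bounded segment---shows in each case that $H$ must fix some point of $T$: a non-trivial translation of a line can neither map a bounded subsegment onto itself nor fix the endpoint of a bridge. This contradicts the assumption $\mbox{Fix}(H)=\emptyset$.

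The main obstacle I expect to face is cleanly invoking the geometric input to the final case analysis: namely, the standard fact that two non-commuting hyperbolic elements of $F_N$ acting on a very small tree have bounded axis intersection, and that when the intersection is empty there is a unique bridge between the axes which any equivariant isometry must preserve. Once these facts about very small actions are in hand, the contradiction is immediate, since a non-trivial translation of $\R$ cannot map a non-degenerate bounded segment into itself nor can it fix a prescribed point of that line.
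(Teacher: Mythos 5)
Your proof of part~(2) is identical to the paper's. For part~(1), you take a genuinely different route. The paper observes that $H$ leaves invariant the axis of \emph{every} hyperbolic element, so $H$ cannot be a hyperbolic isometry (a hyperbolic isometry of an $\R$-tree has a unique invariant line); hence $H$ is elliptic, so on each axis it is the identity or a reflection, and the reflection case is ruled out from $H(wx_0)=wx_0$. You instead rule out reflection directly via the commutation relation $Hw=wH$ and observe $H$ acts as a translation on every axis; you then try to locate a fixed point by looking at the intersection or bridge between two axes and invoking minimality. Both strategies are sound, but the paper's is shorter and self-contained.

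There are two concrete gaps in your argument as written. First, the ``standard fact'' that non-commuting hyperbolic elements of $\FN$ acting on a very small tree have bounded axis overlap is true but is not as routine as you suggest; it requires an argument (if the overlap contained a ray, a commutator of powers of $w_1,w_2$ would fix a subray, which is cyclic by the small condition; then one uses that $\FN$ is free and has no Klein-bottle subgroup and that centralizers in $\FN$ are cyclic to force $w_1,w_2$ to commute). You acknowledge this as an ``obstacle,'' but the Lemma as stated is a clean auxiliary claim and leaving it unproved leaves the argument incomplete. Second, your case analysis lists only three possibilities for $I=\Ax(w_1)\cap\Ax(w_2)$ (empty, a point, a non-degenerate bounded segment), omitting the cases where $I$ is a ray or $\Ax(w_1)=\Ax(w_2)$. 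These are handleable --- if $I$ is a ray, the isometry $H$ preserving $\Ax(w_1)$ and $I$ must fix the endpoint of the ray (a translation can't carry a proper subray to itself), and if the axes coincide then $w_1,w_2$ lie in $\Stab(L)$, a cyclic-by-(cyclic or dihedral) subgroup of the free group $\FN$, hence cyclic, so they commute --- and in fact handling them directly would let you dispense with the boundedness claim altogether, but as written the analysis is not exhaustive.
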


\begin{proof}
%
(1) Let $w\in F_N$ be a hyperbolic element. Then, since
  $\Phi = \Id_{\FN} \in \Aut(\FN)$, by (\dag) of Definition \ref{s-f-a-h}
  we have $H {\rm Ax}(w)=Hw{\rm Ax}(w)=wH {\rm Ax}(w)$.  Thus $w$ preserves the line
  $H Ax(w)$, and therefore $H {\rm Ax}(w)={\rm Ax}(w)$ since ${\rm Ax}(w)$
  is the only $w$-invariant line in $T$.

  Since $H$ is an isometry of $T$, it is either hyperbolic or
  elliptic. If $H$ is a hyperbolic isometry, then it preserves a
  unique line in $T$, namely ${\rm Ax}(H)$, which contradicts the fact
  that $H$ leaves invariant the axis of every hyperbolic element of
  $F_N$. Thus $H$ is elliptic. Hence for every hyperbolic element $w$
  of $F_N$ the isometry $H$ either fixes pointwise or acts as a
  reflection on the axis of $w$. We claim that in fact $H$ fixes every
  axis of a hyperbolic element pointwise. If not, then there exists a
  hyperbolic element $w\in F_N$ such that $H$ acts as a reflection on
  the axis of $w$. Let $x_0\in {\rm Ax}(w)$ be the unique point of
  ${\rm Ax}(w)$ fixed by $H$. Since $H$ represents $\Id_{\FN}$, we have
  $H(wx_0)=wHx_0=wx_0$, so that $H$ fixes $wx_0$. However, $wx_0\in
  {\rm Ax}(w)$ and $d(wx_0,x_0)=||w||_T>0$, so that $wx_0\ne x_0$. This
  contradicts the fact that $H$ acts a reflection centered at $x_0$ on
  ${\rm Ax}(w)$. Thus the claim is established and $H$ fixes every axis
  pointwise. Then it follows from Lemma~\ref{axis} that $H = Id_T$.

\smallskip
\noindent (2) Suppose now that $H$ is an isometry of $T$ that
represents the inner automorphism $I_u \in \Aut(F_N)$.  Then, by
Lemma~\ref{lem:basic}, $u^{-1}H$ is an isometry that represents
$\Id_{F_N}\in \Aut(F_N)$. Therefore, by part (1), $u^{-1}H=\Id_T$, so
that $H(x)=ux$ for every $x\in T$, as required.
\end{proof}

\begin{cor}\label{cor:unique}
For any $T \in \cvnbar$ we have:
\begin{enumerate}
\item Let $\Phi \in \Stab_{Aut(\FN)}([T])$ and let $H_1,H_2$ be two homotheties of $T$ representing $\Phi$. Then $H_1=H_2$.
\item Let $\phi \in \Stab_{Out(\FN)}([T])$ and let $H_1,H_2$ be two homotheties of $T$ representing $\phi$. Then there is $u\in F_N$ such that $uH_1=H_2$.
\end{enumerate}
\end{cor}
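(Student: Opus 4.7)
The plan is to deduce both parts of Corollary~\ref{cor:unique} directly from Lemmas~\ref{lem:basic} and \ref{key1}, treating (1) as the substantive step and (2) as a quick reduction to~(1).

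For part~(1), I would consider the composition $H_2^{-1} \circ H_1$. By Lemma~\ref{lem:basic}(3), $H_2^{-1}$ is a homothety representing $\Phi^{-1}$, and then by Lemma~\ref{lem:basic}(1), $H_2^{-1} \circ H_1$ is a homothety representing $\Phi^{-1} \Phi = \Id_{F_N}$. Its stretching factor is $\lambda(\Phi)^{-1} \lambda(\Phi) = 1$, so it is in fact an isometry of $T$. Lemma~\ref{key1}(1) now gives $H_2^{-1} \circ H_1 = \Id_T$, hence $H_1 = H_2$.

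For part~(2), let $\Phi_1, \Phi_2 \in \Aut(F_N)$ be the lifts of $\phi$ represented by $H_1$ and $H_2$ respectively. Since $\Phi_1$ and $\Phi_2$ project to the same outer class $\phi$, there exists $u \in F_N$ with $\Phi_2 = I_u \circ \Phi_1$. By Lemma~\ref{lem:basic}(2), the map $uH_1$ is a homothety of $T$ representing $\Phi_2$. Applying part~(1) to the two homotheties $uH_1$ and $H_2$, both of which represent $\Phi_2$, we conclude $uH_1 = H_2$.

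The only point that requires any care is making sure the lemmas quoted are applied with the correct element of $\Aut(F_N)$ in mind, since the whole statement hinges on distinguishing the $\Aut$-level uniqueness from the $\Out$-level uniqueness up to the $F_N$-action. No further obstacle is expected, as all the dynamical work (the rigidity for isometries representing $\Id_{F_N}$) has already been handled in Lemma~\ref{key1}.
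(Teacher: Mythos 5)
Your proof is correct and follows essentially the same route as the paper's: compose $H_2^{-1}H_1$, show it is an isometry representing $\Id_{F_N}$ via Lemma~\ref{lem:basic}, invoke Lemma~\ref{key1}(1), and reduce part~(2) to part~(1) through the inner automorphism relating two lifts. If anything, your citation of Lemma~\ref{lem:basic} parts (1) and (3) for the first step is slightly cleaner than the paper's reference.
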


\begin{proof}
(1) Both $H_1$ and $H_2$ are $F_N$-equivariant isometries between the trees $T$ and $\lambda(\Phi)\inv T\Phi$.

Hence, by Lemma \ref{lem:basic} (2), the map
$H:=H_2^{-1}H_1: T\to T$ is an $F_N$-equivariant isometry of $T$, that is $H(wx)=wH(x)$ for every $w\in F_N$ and every $x\in T$.

In particular $H$ is an isometry of $T$ representing the identity
$\Id_{F_N}\in \Aut(F_N)$. Part (1) of Lemma~\ref{key1} implies that $H=\Id_{T}$, so that $H_1=H_2$, as required.

\smallskip
\noindent (2) There are representatives $\Phi_1,\Phi_2\in \Aut(F_N)$ of $\phi$ such that $H_1$ represents $\Phi_1$ and $H_2$ represents $\Phi_2$. Since $\Phi_1, \Phi_2$ both represent $\phi$, there is $u\in F_N$ such that $\Phi_2(g)=u\Phi_1(w)u^{-1}$ for every $w\in F_N$.

Therefore by Lemma~\ref{lem:basic} the isometry $uH_1$ represents $\Phi_2$. Hence by part~(1) we have $uH_1=H_2$, as required.
\end{proof}

\begin{rem}
\label{sumup}
Part (2) of Corollary~\ref{cor:unique} and Lemma~\ref{lem:basic} imply that if $\phi\in \Stab_{Out(\FN)}([T])$ and $H$ is a homothety representing $\phi$ then a homothety $H'$ represents $\phi$ if and only if $H'$ has the form $H'=uH$ where $u\in F_N$.
\end{rem}

\begin{conv}[Subgroup $K_T$]\label{conv:K_T}
Let $T\in \cvnbar$.
As we have already observed, any homothety $H$ representing some $\phi\in \Stab_{Out(\FN)}([T])$ acts by permutations on $F_N$-orbits of branch points and on $F_N$-orbits of directions at branch points. Let $\mathcal D$ be
the set of $F_N$-orbits of
directions at branch points of $T$.
Thus there is a natural homomorphism from $\Stab_{Out(\FN)}([T])$ to the group $Sym(\mathcal D)$ of permutations of $\mathcal D$.
We denote by $K_T$ the kernel of this homomorphism.
\end{conv}

By Theorem~\ref{GL} the set $\mathcal D$ is finite, with upper bound to its cardinality given by Corollary \ref{GL1} (c), so that we obtain:

\begin{cor}\label{cor:K}
Let $T\in \cvnbar$. Then the subgroup $K_T\subseteq \Stab_{Out(\FN)}([T])$
is of finite index.
\qed
\end{cor}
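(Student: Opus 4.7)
The plan is a short appeal to the finiteness assertion of Theorem~\ref{GL} together with the fact, already established in the preceding material, that the action of $\Stab_{\Out(\FN)}([T])$ on $\mathcal D$ is well defined. Concretely, I would first re-examine Convention~\ref{conv:K_T} to verify that the map
\[
\Stab_{\Out(\FN)}([T]) \longrightarrow \Sym(\mathcal D), \qquad \phi \longmapsto \sigma_\phi,
\]
really is a group homomorphism. For this one picks, for each $\phi$, a homothety $H$ representing $\phi$ and defines $\sigma_\phi$ to be the induced permutation of $\mathcal D$. Two such choices $H,H'$ differ, by Remark~\ref{sumup}, by left multiplication by an element $u\in \FN$, and $u$ acts trivially on the set of $\FN$-orbits $\mathcal D$; hence $\sigma_\phi$ is independent of the chosen representative. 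Multiplicativity in $\phi$ follows from Lemma~\ref{lem:basic}(1), which says that $H_1H_2$ represents $\Phi_1\Phi_2$, so that the induced permutations compose correctly.

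Next, I would invoke Theorem~\ref{GL} of Gaboriau--Levitt, which guarantees that $T$ has only finitely many $\FN$-orbits of branch points, and at each branch point $Q$ only finitely many $\Stab_{\FN}(Q)$-orbits of directions; consequently $\mathcal D$ is a finite set and $\Sym(\mathcal D)$ is a finite group. Since by definition $K_T$ is the kernel of the homomorphism above, the quotient $\Stab_{\Out(\FN)}([T])/K_T$ embeds into $\Sym(\mathcal D)$, and is therefore finite. This yields $[\Stab_{\Out(\FN)}([T]) : K_T] \le |\Sym(\mathcal D)| < \infty$, which is the desired conclusion.

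There is essentially no obstacle here; the statement is a formal consequence of (i) the well-definedness of the action of $\Stab_{\Out(\FN)}([T])$ on the finite set $\mathcal D$, which has been prepared in the previous lemmas (most importantly Corollary~\ref{cor:unique} and Lemma~\ref{lem:basic}), and (ii) the finiteness of $\mathcal D$, which is exactly the content of Theorem~\ref{GL}. In the special case treated later (free $\FN$-action) one could even quote the sharper bound $|\mathcal D| \le 6N-6$ from Corollary~\ref{GL1}(3), giving the explicit estimate $[\Stab_{\Out(\FN)}([T]) : K_T] \le (6N-6)!$, but for the present corollary only finiteness is needed.
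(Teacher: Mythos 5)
Your proposal is correct and follows exactly the route the paper takes: the paper gives no separate proof, simply noting in the sentence preceding the corollary that $\mathcal D$ is finite by Theorem~\ref{GL} (with a cardinality bound from Corollary~\ref{GL1}) so that $K_T$, being the kernel of the induced homomorphism to the finite group $\Sym(\mathcal D)$, has finite index. Your added verification that the homomorphism is well defined (via Remark~\ref{sumup} and Lemma~\ref{lem:basic}) is a sound elaboration of a point the paper leaves implicit.
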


From now on we will restrict
most of our attention to automorphisms in $K_T$.

\begin{rem}\label{rem:good}
If $\phi\in K_T$, $x\in T$ is a branch point and $H'$ is a homothety representing $\phi$, we can always choose another homothety $H=wH'$ representing $\phi$, for some $w\in F_N$, such that $H$ fixes $x$. Namely, because $H'$ preserves the $F_N$-orbit of $x$, there is $u\in F_N$ such that $H'(x)=ux$. Then $H=u^{-1}H'$ fixes $x$, as required.
\end{rem}


\begin{defn}[Eigenray]

Let $T\in \cvnbar$ and let $H$ be a homothety representing some
automorphism $\phi\in \Stab_{Out(\FN)}([T])$ with $\lambda(\phi)>1$. A
(closed) geodesic ray $\rho \subseteq T$, which starts at some point $x
\in T$ such that $\rho-\{x\}$ is contained in a direction $d$ at $x$, is called an {\em eigenray of $H$ at $x$ in the direction $d$}, if one has:
\[
H(\rho) = \rho.
\]
In this case it follows that $H(x) = x$ and $H(d) = d$.
Furthermore, note that $H$ acts on $\rho$ as multiplication by $\lambda$ acts on $\mathbb R_{\ge 0}$.
\end{defn}

\begin{prop}
\label{eigenrays}
Let $T\in \cvnbar$ and let $H$ be a homothety representing some $\phi\in
\Stab_{Out(\FN)}([T])$ with $\lambda(\phi)>1$. Let $d$ be a direction
at $x\in T$, and assume that $H(x) = x$ and $H(d) = d$.

Then there exists a unique eigenray of $\rho=\rho_d$ of $H$ in $T$ which starts at $x$ in direction $d$.
\end{prop}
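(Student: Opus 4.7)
The plan is to build the eigenray $\rho_d$ as an increasing nested union of segments obtained by iterating $H$, and then to argue uniqueness via the same mechanism. First I would choose any point $y$ in direction $d$ at $x$ (so $y \ne x$ and $[x,y] \smallsetminus \{x\} \subseteq d$), and consider its image $Hy$. Since $H(x) = x$ and $H(d) = d$, the image segment $[x, Hy] = H([x,y])$ again starts at $x$ in direction $d$, and has length $\lambda \cdot d(x,y)$. By Lemma~\ref{common-segments}, their intersection is a non-degenerate initial segment $[x,y] \cap [x, Hy] = [x,z]$ for some $z \ne x$.

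The technical heart of the argument is to show $[x,z] \subseteq [x, Hz]$. Note that $z \in [x,y]$ gives $Hz \in H([x,y]) = [x, Hy]$, and by definition of $z$ we also have $[x,z] \subseteq [x, Hy]$. Thus both $[x,z]$ and $[x,Hz]$ are initial sub-segments of the geodesic segment $[x, Hy]$; such sub-segments are linearly ordered by length, and since $d(x, Hz) = \lambda\, d(x,z) > d(x,z)$ we conclude $[x,z] \subseteq [x, Hz]$. Iterating $H$ now produces an increasing sequence of nested segments $\sigma_n := H^n([x,z]) = [x, H^n z]$, all lying in direction $d$ at $x$, with lengths $\lambda^n\, d(x,z) \to \infty$. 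Setting $\rho := \bigcup_{n \ge 0} \sigma_n$ gives a closed geodesic ray starting at $x$ in direction $d$, isometric to $[0, \infty)$, and $H(\rho) = \bigcup_{n \ge 0} \sigma_{n+1} = \rho$ by nesting.

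For uniqueness, suppose $\rho'$ is another eigenray of $H$ starting at $x$ in direction $d$. Choosing points $w \in \rho$ and $w' \in \rho'$ sufficiently close to $x$ and applying Lemma~\ref{common-segments}, the rays $\rho$ and $\rho'$ share a common initial segment $[x, p] \subseteq \rho \cap \rho'$ with $p \ne x$. Applying $H^n$ and using $H$-invariance of both rays gives $[x, H^n p] \subseteq \rho \cap \rho'$ for every $n \ge 0$, and since $d(x, H^n p) = \lambda^n d(x,p) \to \infty$, these initial segments exhaust both $\rho$ and $\rho'$. Therefore $\rho = \rho'$.

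The main obstacle is the containment step $[x,z] \subseteq [x, Hz]$: the rest of the argument is a straightforward exhaustion/nesting computation in the tree, but this single comparison is where we must exploit simultaneously that $H(d) = d$, that $\lambda > 1$, and that $[x, Hy]$ is a single geodesic segment so its initial sub-segments are linearly ordered.
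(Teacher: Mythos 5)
Your proof is correct and follows essentially the same approach as the paper: build the eigenray as the nested union $\bigcup_n [x, H^n z]$ of an initial segment iterated under $H$, and get uniqueness from the fact that a common initial segment of two eigenrays is forced by $H$-invariance to grow unboundedly. You actually spell out the nesting step $[x,z]\subseteq [x,Hz]$ more carefully than the paper does (via linear ordering of initial subsegments of $[x,Hy]$), and your uniqueness argument by exhaustion is a cleaner formulation of the paper's bifurcation-point contradiction.
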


\begin{proof}
Since $H(d) = d$, it follows for any point $y \in d$ that the segments $[x, y]$ and $H[x, y] = [x, H(y)]$ overlap non-trivially, by Lemma \ref{common-segments}. Consider a point $z \neq x$ in $[x, y] \cap H[x, y]$. Then $[x, z]$ is a subsegment of $H[x, z] = [x, H(z)] \subseteq [x, H(y)] \cap [x, H^2(y)]$, and the infinite nested union $\cup\{ [x, H^n(z)] \mid n \in N\}$ forms a ray $\rho$ which by construction satisfies $\rho = H(\rho)$, i.e. it is an eigenray at $x$ in the direction $d$.

The uniqueness of $\rho$ follows from the fact that another such
eigenray $\rho'$ must have (by Lemma \ref{common-segments}) a
non-degenerate initial segment in common with $\rho$, but the
bifurcation point (that is, the endpoint of the maximal common initial segment) $y$ must have $H$-image contained in both, $\rho$ and $\rho'$ (by the $H$-invariance of either).  Since $d(x, H(y)) = \lambda(\phi) d(x, y) > d(x,y)$, this yields a contradiction to the above definition of the point $y$.
\end{proof}

The following proposition shows how eigenrays transform under the
action of elements of $\FN$ and under the action by other homotheties.
The statement of this proposition is known (see \cite{GJLL}) but we
present a proof for completeness:

\begin{prop}\label{well-known1}
Let $T\in\cvnbar$ and let $H$ be a homothety that represents some
$\Phi\in Aut(F_N)$ such
that the outer automorphism
class $\phi$
of $\Phi$ belongs to $K_T$, and
such that $\lambda=\lambda(\phi)>1$. Suppose that $H$ fixes a branch point $x\in T$. Let $\rho=\rho_d$ be the eigenrary of $H$ in a direction $d$ at $x$. Then the following hold:

\begin{enumerate}
\item Assume that the $\FN$-action on $T$ is free.
Suppose that  $H' = w H$, for some $w \in \FN$, is such that $H'$ fixes a branch point $x' = v x$, with $v \in \FN$.
Then $H'$ fixes the
direction $d' = v d$ at $x'$, and $v \rho_d$ is the eigenray of $H'$ in the direction $d'$.
Furthermore, in this case $w = v \, \Phi(v)\inv$.

\item Let $H'$ be a homothety of $T$ which represents some element $\psi$ of
$\Stab_{Out(F_n)}([T])$.
Then $H_1=H'H H'^{-1}$ is another homothety that
represents $\psi\phi\psi^{-1}\in K_T$, the stretching
factors of $H$ and $H_1$ are equal, $H_1$ fixes the point $H'(x)$ and the direction $H'(d)$ at $H'(x)$ and, moreover, $H'(\rho)$ is the eigenray of $H_1$ at $H'(x)$ in the direction $H'(d)$.

\item  Let $H'$ be another homothety of $T$ which represents
some element $\psi$ of $K_T$ with $\lambda'=\lambda(\psi)>1$
such that $H'$ fixes $x$ (and hence $H'$ fixes every direction at $x$). Let $\rho'=\rho'_d$ be the eigenray of $H'$ in direction $d$ and let $C>0$ be the length of the maximal common initial segment of $\rho_d$ and
$\rho'_{d}$.

Then the homothety
$H''=H H' H^{-1}$
fixes $x$ and represents $\phi\psi\phi^{-1}$. Let $\rho''=\rho''_d$ be the eigenray of $H''$ in direction $d$. Then $\rho$ and $\rho''$ have a common initial segment of length $\lambda(\phi) C$.
\end{enumerate}
\end{prop}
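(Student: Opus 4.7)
The plan is to handle the three parts in order, exploiting the defining relation $H(gy)=\Phi(g)H(y)$ throughout, together with Proposition~\ref{eigenrays} (uniqueness of eigenrays) to pin down the eigenrays after we have identified an $H'$-invariant ray.

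For part (1), I would first compute directly $H'(x')=wH(vx)=w\Phi(v)H(x)=w\Phi(v)x$, and compare this with $vx$. Freeness of the $F_N$-action on $T$ forces $w\Phi(v)=v$, giving $w=v\Phi(v)^{-1}$. The analogous computation shows $H'(vd)=w\Phi(v)d=vd$, so $H'$ fixes the direction $d'=vd$ at $x'$, and similarly $H'(v\rho_d)=w\Phi(v)\rho_d=v\rho_d$. Thus $v\rho_d$ is a ray starting at $x'$ in direction $d'$ which is $H'$-invariant, and the uniqueness part of Proposition~\ref{eigenrays} identifies it as $\rho_{d'}$.

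For part (2), I would first verify by the same kind of computation that $H_1=H'HH'^{-1}$ represents $\Psi\Phi\Psi^{-1}$, where $\Psi,\Phi$ are the representing automorphisms, and hence the outer class $\psi\phi\psi^{-1}$. Since $K_T$ is normal in $\Stab_{Out(F_N)}([T])$ (being the kernel of the homomorphism of Convention~\ref{conv:K_T} into a symmetric group), we have $\psi\phi\psi^{-1}\in K_T$. The stretching factor of $H_1$ equals $\lambda(\psi)\lambda(\phi)\lambda(\psi)^{-1}=\lambda(\phi)$ by Lemma~\ref{lem:basic}~(1) applied to $H'$ and $H'^{-1}$. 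The three remaining statements are formal: $H_1(H'(x))=H'H(x)=H'(x)$; the $H_1$-image of the direction $H'(d)$ at $H'(x)$ is $H'(d)$ by the same cancellation (since homotheties map directions at a point to directions at its image); and $H_1(H'(\rho))=H'H(\rho)=H'(\rho)$, so $H'(\rho)$ is an $H_1$-invariant ray at $H'(x)$ in the direction $H'(d)$, whence the eigenray by uniqueness.

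For part (3), I would observe that $H''=HH'H^{-1}$ is of precisely the form considered in part (2), but with the roles of $H$ and $H'$ interchanged. Applying part (2) gives that $H''$ fixes $H(x)=x$ and the direction $H(d)=d$, and that the eigenray $\rho''$ of $H''$ in direction $d$ equals $H(\rho')$. Now $\rho$ is $H$-invariant and $H$ is a homothety of stretching factor $\lambda(\phi)$ fixing $x$, so $H$ maps the common initial segment of $\rho$ and $\rho'$ at $x$ (of length $C$) to the common initial segment of $H(\rho)=\rho$ and $H(\rho')=\rho''$ (of length $\lambda(\phi)C$). This gives the claim.

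The only place where some care is really needed is part (2): one must verify explicitly that $H_1$ represents the claimed element of $\Aut(F_N)$ (rather than just of $\Out(F_N)$) and that conjugation preserves $K_T$; the rest of the proof is a rigid application of the defining equation $(\dag)$ and the uniqueness statement of Proposition~\ref{eigenrays}.
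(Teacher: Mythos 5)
Your proof is correct and follows essentially the same strategy as the paper's, with one small simplification in part~(1): rather than first invoking Lemma~\ref{fix} to conclude that $H'$ fixes the direction $d'$ and then extracting the identity $w=v\Phi(v)^{-1}$ from that, you obtain $w\Phi(v)=v$ directly from the hypothesis $H'(x')=x'$ by freeness, and then the invariance of $d'$ and of $v\rho_d$ fall out of the same computation. This is a modest but genuine streamlining, since it removes the dependence on Lemma~\ref{fix} at this step (though freeness is of course still used). Parts~(2) and~(3) match the paper's argument; the paper is terser there (in~(3) it just says the length estimate ``follows from the stretching property of $H$''), and the explicit steps you supply — in particular the observation that $H$ carries the maximal common initial segment $\rho\cap\rho'$ bijectively onto $H(\rho)\cap H(\rho')=\rho\cap\rho''$, scaling its length by $\lambda(\phi)$ — are exactly what the paper leaves implicit.
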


\begin{proof}
%
\noindent (1) Since $H' = w H$, it follows that $H'$ also represents $\phi \in K_T$,
so that $H'$ fixes every $\FN$-orbit of directions of $T$. Thus, since
by assumption the action of $F_N$ on $T$ is free, Lemma~\ref{fix}
implies that $H'$ fixes any direction at its fixed point $x'$, so that
we have $H'(d') = d'$.

 We deduce:
\[
d' = H'(d') = H'(v d) = w H v(d) = w \Phi(v) H(d) = w \Phi(v) d = w \Phi(v) v\inv d'.
\]
Thus the isometry of $T$ given by the action of the element $w \Phi(v)
v\inv$ fixes the direction $d'$ and therefore it fixes the initial
point $x'=vx$ of $d'$.  Since the action of $F_N$ on $T$ is free, it
follows that $w \Phi(v) v\inv=1$ in $F_N$, so that $w = v \Phi(v)\inv$.

Then:
\[
H'(v \rho_d) = w H v (\rho_d) = w \Phi(v) H(\rho_d) = v H(\rho_d) = v \rho_d \, ,
\]
which shows that $v \rho_d$ is the eigenray of $H'$ at $x'$ in the direction of $d'$.

\smallskip
\noindent
(2)
We note that
\[
H_1(H'(\rho))=H'H H'^{-1} H' (\rho)=H' H (\rho) =H'(\rho).
\]
Thus $H_1(H'(\rho))=H'(\rho)$ which, since $H_1$ is a homothety and $H'(\rho)$ is a ray at $H'(x)$ in the direction $H'(d)$, implies the statement of part (2) of the proposition.

\smallskip
\noindent
(3)
This is a direct consequence of part (2). The length estimate for the common initial segment of  $\rho$ and $\rho''$ follows from that for $\rho$ and $\rho'$ and from the stretching property of the homothety $H$.
\end{proof}

\section{Proof that stabilizers of projectivized free $F_N$-trees are virtually cyclic}

Our strategy for the proof that $\Stab_{Out(F_N)}([T])$ is virtually
cyclic if $T\in \cvnbar$ is a free $F_N$-tree will be to show that the
map
\[
\lambda: \Stab_{Out(\FN)}([T]) \to \mathbb R_{>0}
\]
has image $Im(\lambda)\subseteq \mathbb
R_{>0}$ which is cyclic, and that the kernel $Ker(\lambda)\subseteq
\Stab_{Out(\FN)}([T])$ is finite. This will imply that
$\Stab_{Out(\FN)}([T])$ is virtually cyclic.

Recall from Convention \ref{conv:K_T}
that
an outer automorphism $\phi\in \Out(\FN)$ belongs to the finite index normal subgroup $K_T \le \Stab_{Out(F_N)}([T])$
if and only if any homothety $H$ representing $\phi$ preserves every $F_N$-orbit of branch points in $T$, as well as every $F_N$-orbit of directions at a branch point in $T$.

\begin{prop}\label{key2}
Let $T \in \cvnbar$,
and suppose that the $\FN$-action on $T$ is free.
Let $H$ be an isometry of $T$
that represents some $\Phi\in K_T \subseteq \Stab_{Aut(\FN)}([T])$. Suppose that $H$
fixes a branch point $x_0$ of $T$. Then $H=\Id_T$ and $\Phi=\Id_{F_N}$.
\end{prop}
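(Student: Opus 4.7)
Proof plan for Proposition \ref{key2}.

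The plan is to show first that $H = \Id_T$, and then to deduce $\Phi = \Id_{F_N}$ from the freeness of the $F_N$-action. Since $H$ is an isometry representing an element of $K_T$ and fixes the branch point $x_0$, Lemma~\ref{fix} implies that $H$ fixes every direction at $x_0$. So the starting point is: $H$ is a distance-preserving bijection of $T$ that fixes $x_0$ together with every direction emanating from $x_0$.

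Next I would argue by contradiction, supposing there exists $y \in T$ with $H(y) \neq y$. Necessarily $y \neq x_0$, so $y$ lies in some direction $d$ at $x_0$. Because $H$ is an isometry fixing $x_0$ and preserves $d$, the image $H(y)$ also lies in $d$ and satisfies $d(x_0, H(y)) = d(x_0, y)$. By Lemma~\ref{common-segments}, the segments $[x_0, y]$ and $[x_0, H(y)]$ share a non-degenerate initial segment $[x_0, z]$. A short check rules out $z = x_0$ (which would force $y$ and $H(y)$ into different directions at $x_0$) and $z \in \{y, H(y)\}$ (which would force equal distances from $x_0$ to coincide with containment, giving $y = H(y)$). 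Hence $z$ is a genuine bifurcation point, and in particular a branch point of $T$ since it has at least three directions (towards $x_0$, towards $y$, and towards $H(y)$).

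The key step is now to observe that $H$ fixes $z$ and then to reach a contradiction via Lemma~\ref{fix}. Since $H$ sends $[x_0, y]$ isometrically onto $[x_0, H(y)]$ and preserves distance from $x_0$, and since $z$ lies on both segments at the same distance $d(x_0, z)$ from $x_0$, we get $H(z) = z$. By Lemma~\ref{fix} (using freeness of the $F_N$-action and $\Phi \in K_T$), $H$ fixes every direction at $z$. But $H$ must map the segment $[z, y]$ onto $[H(z), H(y)] = [z, H(y)]$, and by the definition of the bifurcation point $z$ these two segments start at $z$ in \emph{distinct} directions $d_1 \neq d_2$. Since $H$ fixes direction $d_1$, the image segment must start in direction $d_1$, not $d_2$ — a contradiction. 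Hence $H = \Id_T$.

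Finally, with $H = \Id_T$ in hand, the defining equation $H(wx) = \Phi(w) H(x)$ becomes $wx = \Phi(w) x$ for every $w \in F_N$ and every $x \in T$; freeness of the $F_N$-action then forces $\Phi(w) = w$ for all $w$, i.e. $\Phi = \Id_{F_N}$. (Alternatively, one invokes Lemma~\ref{key1}(1) after noting $H = \Id_T$ represents both $\Phi$ and $\Id_{F_N}$ and applying uniqueness as in Corollary~\ref{cor:unique}(1).) The main obstacle is the bifurcation-point argument in the third paragraph; everything else is bookkeeping.
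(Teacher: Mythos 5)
Your proof is correct, and it takes a noticeably different route from the paper. The paper introduces the fixed-point subtree $T_0 = \mathrm{Fix}(H)$, picks a point $y \in T_0 \cap \overline{T \setminus T_0}$, and splits into two cases according to whether $\mathrm{val}(y) = 2$ or $\mathrm{val}(y) \ge 3$; in the valence-$2$ case it argues directly with Lemma~\ref{common-segments}, and in the branch-point case it invokes Lemma~\ref{fix} as you do. Your argument sidesteps both the fixed-point subtree and the case split: starting from any $y$ with $H(y) \ne y$, you manufacture a bifurcation point $z$ of $[x_0,y]$ and $[x_0,H(y)]$, observe that $z$ is \emph{automatically} a branch point (three distinct directions: toward $x_0$, toward $y$, toward $H(y)$), check that $H(z) = z$ by equidistance from $x_0$, and then Lemma~\ref{fix} delivers the contradiction immediately since $H$ must swap the two directions of $[z,y]$ and $[z,H(y)]$. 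This is arguably cleaner: every auxiliary point you produce is already a branch point, so only the branch-point branch of the paper's argument is ever needed. The trade-off is minor — the paper's subtree formulation is more structural and generalizes somewhat more naturally to settings where one wants to describe $\mathrm{Fix}(H)$ itself — but for the statement at hand the two approaches do equivalent work, with yours organized more economically. The final deduction $\Phi = \Id_{F_N}$ from $H = \Id_T$ via freeness is identical in both.
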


\begin{proof}

We define $T_0:=Fix(H)$ to be the set of fixed points of $H$. Thus $T_0\subseteq T$ is a closed subtree of $T$ which contains $x_0$. We claim that $T=T_0$.
Indeed, by way of contradiction let us suppose $T_0\ne T$. 

Then there exists a point $y\in T-T_0$. Let $z$ be the nearest point
projection of $y$ to $T_0=Fix(H)$. Thus $Hz=z$ and $[z,y]\cap [z,
Hy]=\{z\}$. Suppose first that $z\ne x_0$.  Then $z$ is a branch point
since the directions at $z$ determined by $[z,x_0]$, $[z, y]$ and $[z,Hy]$
are distinct. Now Lemma 3.5 implies that $H$ fixes every direction at $z$
and hence $H$ must fix a non-degenerate initial segment of $[z,y]$,
contrary to the fact that $[z,y]\cap [z, Hy]=\{z\}$. Thus $z=x_0$.
However, $x_0$ is a branch point and hence by Lemma 3.5 $H$ must fix a
non-degenerate initial segment of $[x_0,y]$, again yielding a
contradiction.

Thus indeed, $T=T_0$, so that $H=\Id_T$. Using the fact that $H$ represents $\Phi$, we get
\[
wx=H(wx)=\Phi(w)H(x)=\Phi(w)x \text{  for every  } x\in T, w\in F_N.
\]

Since the action of $F_N$ on $T$ is free, this implies that $w=\Phi(w)$ for every $w\in F_N$. Thus $\Phi=\Id_{F_N}$
as claimed.
\end{proof}

Recall from Lemma \ref{lem:basic} and the subsequent discussion that
\[
\lambda: \Stab_{Out(\FN)}([T]) \to \mathbb R_{>0}
\]
is the stretching factor homomorphism.

\begin{prop}\label{finite}
If the $\FN$-action on $T \in \cvnbar$ is free, then  \[Ker(\lambda|_{K_T})=\{
1_{Out(F_N)}\}.\]
\end{prop}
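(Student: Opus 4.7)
The plan is to show directly that any $\phi \in K_T$ with $\lambda(\phi)=1$ is the identity, by producing a homothety representative that satisfies the hypotheses of Proposition~\ref{key2}. Since $\lambda(\phi)=1$, every homothety $H$ representing $\phi$ is an isometry of $T$. The strategy is to adjust $H$ by a left multiplication by an element of $F_N$ so that the resulting isometry fixes a branch point, and then invoke Proposition~\ref{key2}.

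First I would verify that $T$ actually contains a branch point. Since $N\ge 2$ and the $F_N$-action on $T$ is free and minimal, $T$ cannot be a single line: otherwise $F_N$ would embed as a free discrete group of isometries of $\mathbb{R}$, which is impossible. Hence there exists at least one branch point $x \in T$. (Alternatively, one can quote Corollary~\ref{GL1}, noting that for minimal free actions with $N\ge 2$ the tree is not a line.)

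Next I would pick any homothety $H$ of $T$ representing $\phi$; by Lemma~\ref{lem:basic} and Remark~\ref{sumup}, all other representing homotheties are of the form $wH$ with $w\in F_N$. Because $\phi \in K_T$, the isometry $H$ preserves every $F_N$-orbit of branch points, so there exists $u\in F_N$ with $H(x)=ux$. Following Remark~\ref{rem:good}, set $H':=u^{-1}H$. Then $H'$ is still an isometry representing $\phi$ (via the lift $I_{u^{-1}}\circ \Phi \in \Aut(F_N)$, where $\Phi$ is the lift represented by $H$), and $H'(x)=x$.

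At this point Proposition~\ref{key2} applies directly: $H'$ is an isometry of $T$, represents some $\Phi' \in \Stab_{\Aut(F_N)}([T])$ whose outer class lies in $K_T$, and fixes the branch point $x$. The conclusion is that $H'=\Id_T$ and $\Phi'=\Id_{F_N}$, hence $\phi=1_{\Out(F_N)}$. I do not anticipate any serious obstacle; the entire argument is a short assembly of Proposition~\ref{key2}, Remark~\ref{rem:good}, and the observation that $T$ must contain a branch point. The only delicate point is the existence of a branch point, since for $\lambda(\phi)=1$ we cannot appeal to the density statement of Lemma~\ref{dense-branch-points}, but the non-linearity of $F_N$ gives what we need.
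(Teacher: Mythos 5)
Your proposal is correct and follows essentially the same route as the paper: use Remark~\ref{rem:good} (whose argument you spell out) to replace the given homothety by one fixing a branch point, then apply Proposition~\ref{key2} to conclude $H=\Id_T$ and $\Phi=\Id_{F_N}$. The one small thing you add beyond the paper's proof is the explicit verification that $T$ contains a branch point (since $N\ge 2$ and $F_N$ cannot act freely on $\R$); the paper leaves this implicit, and your observation that the density result of Lemma~\ref{dense-branch-points} is unavailable here because $\lambda(\phi)=1$ is a valid and careful point.
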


\begin{proof}
Suppose $\phi \in Ker(\lambda|_{K_T})$, that is $\phi\in K_T$ and $\lambda(\phi)=1$. This means that every homothety $H$ representing $\phi$ is actually an isometry of $T$.

Recall that, since $\phi\in K_T$, every homothety which represents $\phi$ acts as an identity permutation on the set of $F_N$-orbits of branch points and directions at branch points in $T$. By Remark~\ref{rem:good}, we can find a lift $\Phi$ of $\phi$ to $\Aut(F_N)$ and a homothety $H$ representing $\Phi$ such that $H$ fixes some branch point of $T$.

Thus we can apply
Proposition~\ref{key2} to obtain that $H=\Id_T$ and $\Phi=\Id_{F_N}$.

Therefore
$\phi$ is the trivial outer automorphism,
and $Ker(\lambda|_{K_T})=\{
1_{Out(F_N)}\}$ as required.
\end{proof}

\begin{prop}
\label{discrete-image}
Let $T\in \cvnbar$,
and assume that the $\FN$-action on $T$ is free.

Then the set $\lambda(K_T)\subseteq \mathbb R_{>0}$ is a cyclic subgroup of
the multiplicative group
$\mathbb R_{>0}$.
\end{prop}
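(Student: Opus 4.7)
The plan is to show that $\lambda(K_T)\subseteq \mathbb R_{>0}$ is either trivial or a non-trivial discrete (hence infinite cyclic) subgroup. By Proposition~\ref{finite}, $\lambda|_{K_T}$ is injective, so $K_T$ embeds in the abelian group $\mathbb R_{>0}$ and is itself abelian. Fix a branch point $x_0\in T$. For each $\phi\in K_T$, Remark~\ref{rem:good} provides a homothety representing $\phi$ which fixes $x_0$, and the freeness of the $F_N$-action combined with Corollary~\ref{cor:unique}(2) makes this homothety unique; call it $H_\phi$ (if $H_\phi,H'_\phi$ both fix $x_0$ and represent $\phi$, then $H'_\phi=uH_\phi$ for some $u\in F_N$ with $ux_0=x_0$, forcing $u=1$). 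Since $H_\phi H_\psi$ and $H_{\phi\psi}$ both fix $x_0$ and represent $\phi\psi$, uniqueness forces $\phi\mapsto H_\phi$ to be a homomorphism.

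Assume $\lambda(K_T)\ne\{1\}$ and fix a direction $d_0$ at $x_0$; by Lemma~\ref{fix} every $H_\phi$ fixes $d_0$, so for each $\phi\in K_T$ with $\lambda(\phi)>1$, Proposition~\ref{eigenrays} yields a unique eigenray $\rho_\phi$ at $x_0$ in direction $d_0$. My first main claim is that all these eigenrays coincide. Given $\phi,\psi\in K_T$ with $\lambda(\phi),\lambda(\psi)>1$, abelianness of $K_T$ and uniqueness of $H_\phi$ force $H_\phi H_\psi H_\phi^{-1}=H_\psi$. Let $C$ be the length of the maximal common initial segment of $\rho_\phi$ and $\rho_\psi$; by Lemma~\ref{common-segments}, $C>0$. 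Proposition~\ref{well-known1}(3) applied to $H_\phi,H_\psi$ says the eigenray of $H_\phi H_\psi H_\phi^{-1}$, which is $\rho_\psi$, shares an initial segment of length $\lambda(\phi)C$ with $\rho_\phi$. Hence $\lambda(\phi)C=C$, and since $\lambda(\phi)>1$ we must have $C=+\infty$ and $\rho_\phi=\rho_\psi=:\rho$. Parametrizing $\rho$ by $t=d(x_0,\cdot)$ and passing to inverses, every $H_\phi$ ($\phi\in K_T$) preserves $\rho$ and acts on it as multiplication by $\lambda(\phi)$.

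The main obstacle is ruling out density of $\lambda(K_T)$ in $\mathbb R_{>0}$. Pick a branch point $y_0\in\rho$ with $t_0=d(x_0,y_0)>0$ (possible by Corollary~\ref{cor:dense}); for each $\phi\in K_T$, $H_\phi(y_0)=v_\phi y_0$ with $v_\phi\in F_N$ uniquely determined by freeness, and $v_\phi y_0\in\rho$ at distance $\lambda(\phi)t_0$ from $x_0$. Suppose for contradiction that $\lambda(K_T)$ is dense; then one obtains $\phi_n\in K_T$ with $\lambda(\phi_n)\to 1^+$ and $v_n:=v_{\phi_n}\ne 1$, forcing $v_ny_0\to y_0$ in $T$ while $v_n\ne 1$. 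The plan is to use Corollary~\ref{GL1}: for a transverse direction $e$ at $y_0$, each $H_{\phi_n}$ sends the subtree at $y_0$ in direction $e$ to a subtree attached to $\rho$ at $v_ny_0$ in direction $v_ne$, which is transverse to $\rho$ (since $e\neq d_{y_0,\text{in}},d_{y_0,\text{out}}$ and $v_n$ acts by isometry). These translated subtrees are isometric copies of the original and are pairwise disjoint (any shared point not on $\rho$ would, together with the interior paths to the roots $v_n y_0, v_m y_0$ and the $\rho$-arc between them, produce a non-trivial loop, contradicting the tree axiom). The plan is to combine this infinite accumulation of disjoint transverse subtrees near $y_0$ with the finiteness of $F_N$-orbits of directions at branch points: after passing to a subsequence on which the local combinatorial type at $v_ny_0$ stabilizes, one should reach a configuration incompatible with the freeness of the $F_N$-action on $T$, thereby contradicting $v_n\ne 1$.

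Once discreteness is established, $\lambda(K_T)$ is a non-trivial discrete subgroup of $(\mathbb R_{>0},\cdot)$, hence infinite cyclic. The hard part is the last step: converting the geometric picture of accumulating isometric transverse subtrees into a rigorous contradiction requires careful quantitative control of the tree structure near $y_0$ and is where the finiteness input from Corollary~\ref{GL1} is decisive.
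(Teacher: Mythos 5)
Your first two steps are correct and actually slicker than the paper's setup in one respect. Using Proposition~\ref{finite} to conclude that $\lambda|_{K_T}$ is injective, hence that $K_T$ is abelian, and then using commutativity of the canonical base-point-fixing homotheties $H_\phi$ together with Proposition~\ref{well-known1}(3) and Lemma~\ref{common-segments} to get a \emph{single} common eigenray $\rho$ at $(x_0,d_0)$, is a clean observation: it achieves directly what the paper arranges only approximately by conjugating by high powers of $\psi_1$. The uniqueness of the homothety $H_\phi$ fixing $x_0$ (via Corollary~\ref{cor:unique}(2) and freeness) and the homomorphism property $H_{\phi\psi}=H_\phi H_\psi$ are also fine.

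The gap is in the final, and crucial, step: ruling out density of $\lambda(K_T)$. From density you correctly extract $\phi_n\in K_T$ with $\lambda(\phi_n)\to 1^+$ and nontrivial $v_n\in F_N$ satisfying $H_{\phi_n}(y_0)=v_ny_0\to y_0$, but this alone is \emph{not} a contradiction. In any free $F_N$-tree with dense orbits (which your $T$ necessarily is, once $\lambda(K_T)\neq\{1\}$, by Lemma~\ref{dense-branch-points}) there are plenty of nontrivial $v_n$ with $v_ny_0\to y_0$; that is essentially what ``dense orbits'' means. Your subsequent picture of pairwise disjoint isometric transverse subtrees accumulating along $\rho$ is likewise perfectly consistent with a free action — nothing in Corollary~\ref{GL1} or in the finiteness of $F_N$-orbits of directions forbids it, and you do not extract any group-theoretic consequence from it. What is missing is a mechanism that converts metric accumulation into an algebraic constraint on the elements $v_n$. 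The paper's proof supplies exactly this: by tracking how the $H_i$ move a branch point $P$ on the eigenray, it produces elements $w_i\in F_N$ such that the differences $w_j^{-1}w_i$ almost fix a fixed nondegenerate segment $J$ and have arbitrarily small positive translation lengths; then the commutators $[w_j^{-1}w_i,w_k^{-1}w_l]$ fix $J$ pointwise, hence are trivial by freeness of the action, so the $w_j^{-1}w_i$ lie in a common maximal cyclic subgroup $\langle g\rangle$, which is incompatible with arbitrarily small positive translation lengths. Your proposal has no analogue of this commutator/cyclic-subgroup argument, and you explicitly flag the closing step as a ``plan''. As written, the proposition is not proved.
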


\begin{proof}
Suppose, on the contrary, that the subgroup $\lambda(K_T)\subseteq \mathbb R_{>0}$ is not cyclic. Since $(\mathbb R_{>0},\cdot)$ is isomorphic to $(\mathbb R, +)$, it follows that a subgroup of $(\mathbb R_{>0},\cdot)$ is either discrete and cyclic or else it is dense in $\mathbb R_{>0}$. Thus the subgroup $\lambda(K_T)\subseteq \mathbb R_{>0}$ is dense.

Therefore we can find a sequence $\psi_i\in K_T$ such that $\lambda(\psi_i)\in [2, 2.001]$ are all distinct and $\lim_{i\to\infty} \lambda(\psi_i)=2$. Since there are only finitely many $F_N$-orbits of branch points and directions at branch points, by part (3) of Proposition~\ref{well-known1} we can find $\phi_1=\psi_1$ and $\phi_i=\psi_1^{n_i} \psi_i \psi_1^{-n_i}$ for $i\ge 2$, with the following property:

Whenever $x$ is a branch point of $T$, $d$ is a direction at $x$,
$\widehat H_i$ are homotheties fixing $x$ and representing $\phi_i$
and $\widehat\rho_i$ are their eigenrays in the direction $d$ then for
every $i\ge 2$ the rays $\widehat\rho_i$ and $\widehat\rho_1$ have an overlap of length at least $100$.

Note that $\lambda(\phi_i)=\lambda(\psi_i)$ for every $i\ge 1$.

Now choose a branch point $Q$ of $T$ and a direction $d$ at $Q$.
For every index $i \geq 1$ let
$H_i$ be the homothety representing $\phi_i$ and fixing $Q$ and let $\rho_i$ be its eigenray in the direction $d$. Let $Q'$ be the point at distance $100$ from $Q$ on $\rho_1$. By assumption on $\psi_i$ we have $[Q,Q']\subseteq \rho_i$ for all $i\ge 1$.

Since $T$ is a free $F_N$-tree with dense orbits, branch points are dense in every non-degenerate segment in $T$, particularly in $[Q,Q']$.
Let $P\in [Q,Q']$ be a branch point such that $0<d(Q,P)<10$.

By Corollary~\ref{cor:dense} such a point $P$ always exists since we
assumed that $\lambda(K_T)\ne \{1\}$ and thus branch points of $T$ are
dense in every nondegenerate segment of $T$.

Let $S\in [Q,Q']$ be such that $d(Q,S)=11$.

We have $H_i([P,S])\subseteq [Q,Q']$ for every $i\ge 1$, since the
stretching factors $\lambda(\phi_i)$ belong to the interval
$[2,2.01]$. Let $d_1$ be the direction at $P$ determined by $[P,Q']$.

Let $H_i'$ be the homotheties representing $\phi_i$ and fixing $P$ and
let $\rho_i'$ be their eigenrays in the direction $d_1$. Note that
$\rho_1'$ has an overlap of positive length $\ge c>0$ (for some
$0<c\le 1$) with $[P,Q']$. Since each $\rho_i'$ has overlap of length
$\ge 100$ with $\rho_1'$, it follows that each $\rho_i'$ has overlap
of length $\ge c$ with $[P,Q']$. Put $P_i=H_iP\in [P,Q']$, and let
$d^i_2$ be the direction at $P_i$ determined by $[P_i,Q']$.

Since $S \in d_1$ and $H_i([P,S])\subseteq [Q,Q']$, it follows that
$H_i(d_1) = d^i_2$.  Since $H_i$ preserves $F_N$-orbits of branch
points and of directions at branch points, it follows that for every
$i\ge 1$ there is $w_i\in F_N$ such that $P_i=w_iP$ and $d^i_2 = w_i
d_1$.

For every $i\ge 1$ let $H_i''$ be the homothety representing $\phi_i$
and fixing $P_i$. Let $\rho_i''$ be the eigenray of $H_i''$ in the
direction $d^i_2$.

Then part (1) of Proposition~\ref{well-known1} implies that
$\rho_i''=w_i\rho_i'$.  On the other hand, recall from
Corollary~\ref{cor:unique} that $H_i'$ has the
form $H_i'=v_iH_i$ for some $v_i\in F_N$.

Consider the ray $H_i(\rho_i')$. Part (2) of
Proposition~\ref{well-known1} implies that $H_i(\rho_i')$ is the
eigenray of the homothety $H_i H_i' H_i^{-1}$ that fixes $H_i(P)=P_i$
and in the direction $H_i(d_1) = d^i_2$.

Also,
\[
H_i H_i' H_i^{-1}= H_i v_i H_i H_i^{-1}=H_i v_i =\Phi_i(v_i)H_i
\]
so that this homothety represents $\phi_i$. Since furthermore it fixes $P_i$,
it must be equal to $H_i''$. It follows that $H_i(\rho_i')=\rho_i''=w_i\rho_i'$.

Note that by construction the ray $H_i(\rho_i')$ has overlap of length
$\ge 2c$ with $[P_i,Q']$. Let $Z\in [P,Q']$ be such that
$d(P,Z)=c$. Then we have $w_i[P,Z]\subseteq [P_i,Q]$ for every $i\ge
1$.

It follows that $w_j^{-1}w_i$, as $i,j\to \infty$ almost fixes $[P,Z]$
and acts on a fixed nondegenerate subsegment $J$ of the interior of
$[P,Z]$ with positive translation length which tends to zero. This
easily leads to a contradiction with the fact that the action of $F_N$
on $T$ is free. Indeed, the points $P_i=H_iP$ converge to the point
$P_\infty\in [Q,Q']$ where $d(Q,P_\infty)=2d(Q,P)$. Hence the
commutators $[w_j^{-1}w_i, w_k^{-1}w_l]$ fix $J$ pointwise as
$i,j,k,l\to\infty$ and therefore $[w_j^{-1}w_i, w_k^{-1}w_l]=1$. Since
$F_N$ is free, it follows that $w_j^{-1}w_i$ belong to a common
maximal cyclic subgroup $\langle g\rangle\le F_N$. However, this
contradicts the fact that $w_j^{-1}w_i$ can be made to have
arbitrarily small positive translation length.
\end{proof}

We can now prove the main result of this section. We define:
 \[
 P_T=\{\phi\in \Stab_{Out(F_N)}([T])
 \mid
\lambda(\phi)=1\}
\]

\begin{thm}\label{thm:vc}
Let
$T\in\cvnbar$ be a tree
with
free $F_N$-action.
Then we have:

\begin{enumerate}
\item[(a)]

The group $P_T$ is finite, and
the stabilizer $\Stab_{Out(F_N)}([T])$ is virtually cyclic. Moreover:
\[
\Stab_{Out(F_N)}([T])=\begin{cases} P_T \quad \quad \, \, \,  \, \, \text{ if }
  \Stab_{Out(F_N)}([T]) \text{ is finite}, \\
P_T\rtimes \mathbb Z \quad \text{ if }
  \Stab_{Out(F_N)}([T]) \text{ is infinite}\end{cases}
\]

\item[(b)]
If $T$ does not have dense $F_N$-orbits of branch points, then $\Stab_{Out(F_N)}([T])$ is finite.
\end{enumerate}
\end{thm}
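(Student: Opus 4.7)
The plan is to assemble the pieces already in place: Propositions \ref{finite} and \ref{discrete-image}, together with the finite-index inclusion $K_T \le \Stab_{Out(F_N)}([T])$ from Corollary \ref{cor:K}. All the substantial work has been done; what remains is essentially bookkeeping.

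First I would establish that $P_T$ is finite. By definition $P_T = \{\phi : \lambda(\phi) = 1\}$ is the kernel of the homomorphism $\lambda$, hence normal in $\Stab_{Out(F_N)}([T])$. Proposition \ref{finite} asserts $P_T \cap K_T = \{1\}$, so the restriction to $P_T$ of the quotient map $\Stab_{Out(F_N)}([T]) \to \Stab_{Out(F_N)}([T])/K_T$ is injective. Since $K_T$ has finite index by Corollary \ref{cor:K}, $P_T$ is finite.

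Next I would show that $\lambda(\Stab_{Out(F_N)}([T]))$ is cyclic. Proposition \ref{discrete-image} says $\lambda(K_T)$ is cyclic, and since $K_T$ has finite index in $\Stab_{Out(F_N)}([T])$, its image has finite index in $\lambda(\Stab_{Out(F_N)}([T]))$. Here I need the elementary fact that any subgroup of $(\R_{>0},\cdot)$ containing a cyclic subgroup of finite index is itself cyclic: after passing to logarithms in $(\R,+)$, if $H \subseteq G \subseteq \R$ with $H$ cyclic and $[G:H] = n$, then $nG \subseteq H$, so $G \subseteq \tfrac{1}{n}H$, which is cyclic. Since $\R_{>0}$ is torsion-free, $\lambda(\Stab_{Out(F_N)}([T]))$ is either trivial or infinite cyclic. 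In the first case $\Stab_{Out(F_N)}([T]) = P_T$ is finite; in the second the short exact sequence
\[
1 \longrightarrow P_T \longrightarrow \Stab_{Out(F_N)}([T]) \stackrel{\lambda}{\longrightarrow} \Z \longrightarrow 1
\]
splits because $\Z$ is free, yielding $\Stab_{Out(F_N)}([T]) = P_T \rtimes \Z$. Either way the group is virtually cyclic, finishing part (a).

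For part (b) I would invoke Lemma \ref{dense-branch-points}: if some homothety representing an element of $\Stab_{Out(F_N)}([T])$ has stretching factor $\ne 1$, then the $F_N$-orbit of every branch point is dense in $T$. Under the hypothesis of (b) this is excluded, so $\lambda \equiv 1$ on $\Stab_{Out(F_N)}([T])$, whence $\Stab_{Out(F_N)}([T]) = P_T$ is finite by part (a). The only mildly non-routine step in the whole argument is the lemma on subgroups of $\R$ with finite-index cyclic subgroups; everything else is direct assembly of prior results, so I do not anticipate a real obstacle.
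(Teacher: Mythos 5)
Your proof is correct, and part~(a) follows the paper's argument essentially verbatim: finiteness of $P_T$ from $P_T\cap K_T=Ker(\lambda|_{K_T})=\{1\}$ and finite index of $K_T$, cyclicity of $\lambda(K_T)$ from Proposition~\ref{discrete-image}, and the splitting $\Stab_{Out(F_N)}([T])=P_T\rtimes\Z$ from projectivity of $\Z$ (the paper constructs the complement $\langle\phi\rangle$ explicitly, which is the same thing). You do make explicit a point the paper glides over — why $\lambda(\Stab_{Out(F_N)}([T]))$, not just $\lambda(K_T)$, is cyclic — via the observation that a subgroup of $(\R,+)$ with a cyclic finite-index subgroup is cyclic; the paper gets this implicitly from virtual cyclicity plus torsion-freeness of $\R_{>0}$. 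Both are fine.

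For part~(b), however, you take a genuinely shorter route than the paper. You invoke the contrapositive of Lemma~\ref{dense-branch-points}: any homothety with stretching factor $\ne 1$ representing an automorphism forces dense $F_N$-orbits of branch points, so the hypothesis kills all nontrivial stretching factors and $\Stab_{Out(F_N)}([T])=P_T$. The paper instead gives a self-contained argument from scratch: it forms the simplicial quotient tree obtained by collapsing the subtrees with dense orbit, observes this quotient has finitely many edge orbits and hence $T$ has a nonempty finite set of $F_N$-orbits of maximal branchpoint-free segments with lengths bounded below, and derives a contradiction by iterating a contracting homothety on such a segment. Your argument is leaner and fully valid given that Lemma~\ref{dense-branch-points} is already available; the paper's argument is more explicit about the structure of $T$ in the non-dense case, which is arguably illuminating but not logically necessary. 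In short: same framework, but a cleaner route to~(b).
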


\begin{proof}
(a)
Let $K_T \subseteq \Stab_{Out(F_N)}([T])$ be a normal subgroup of finite index
chosen as in Convention~\ref{conv:K_T}
and recall that $\lambda:\Stab_{Out(F_N)}([T])\to \mathbb R_{>0}$ is the stretching factor homomorphism.
Then we know from Proposition~\ref{finite} that $Ker(\lambda|_{K_T})=\{1\}$.
Moreover,
Proposition~\ref{discrete-image} implies that $\lambda(K_T)\subseteq \mathbb R_{>0}$ is cyclic. Therefore $\Stab_{Out(F_N)}([T])$ is
virtually cyclic.

Since by Corollary~\ref{cor:K} the subgroup $K_T$ has finite index in $\Stab_{Out(F_N)}([T])$, we see that $P_T$ contains
$Ker(\lambda|_{K_T})=\{1\}$ as a subgroup of finite index and thus
$P_T$ is finite.

Also, since every $\phi\in \Stab_{Out(F_N)}([T])$ with
$\lambda(\phi)\ne 1$ has infinite order, we conclude that
$P_T=\Stab_{Out(F_N)}([T])$ if and only if $\Stab_{Out(F_N)}([T])$ is
finite, that is, if and only if $\lambda\big(\Stab_{Out(F_N)}([T])\big)= \{1\}$.

Additionally, $P_T
\triangleleft \Stab_{Out(F_N)}([T])$ is normal in
$\Stab_{Out(F_N)}([T])$. 

Thus if
$\lambda\big(\Stab_{Out(F_N)}([T])\big)\ne \{1\}$, then
$\lambda\big(\Stab_{Out(F_N)}([T])\big)$ is infinite cyclic and for
any $\phi\in \Stab_{Out(F_N)}([T])$ mapped by $\lambda$ to the
generator of $\lambda\big(\Stab_{Out(F_N)}([T])\big)$ we have $P_T\cap
\langle \phi\rangle=\{1\}$. Therefore
$\Stab_{Out(F_N)}([T])=P_T\rtimes \langle \phi\rangle$ in this case.

\smallskip
\noindent
(b) Suppose now that $T$ is a minimal free $F_N$-tree which does not
have dense orbits of branch-points. We claim that $\lambda(K_T)=\{1\}$ in this
case. Indeed, suppose not. Then there exists $\phi\in K_T$ with
$0<\lambda(\phi)<1$. Let $\Phi\in \Aut(F_N)$ be a lift of $\phi$ to
$\Aut(F_N)$ and let $H$ be a $\lambda(\phi)$-homothety representing
$\Phi$.

Since $T$ does not have dense $F_N$-orbits of branch-points, the canonical simplicial metric quotient tree $T/(T^i_{dense})_i$, obtained from $T$ by contracting every maximal subtree $T^i_{dense}$ where an $\FN$-orbit is dense, is a non-trivial $\R$-tree with isometric $\FN$-action that has trivial stabilizers
(see \cite{Le} for more details). Hence it has only finitely many
orbits of edges.  On the other hand, the union of maximal open
nondegenerate segments in $T$, whose interiors do not contain any
branch point, is mapped by the $\FN$-equivariant quotient map $T \to
T/(T^i_{dense})_i$ injectively to the union of open edges in
$T/(T^i_{dense})_i$. Hence in $T$ there exist only finitely many $F_N$-orbits of maximal closed nondegenerate segments
whose interiors do not contain any branch points, and moreover, there is at least one such segment. In particular, the lengths of maximal closed nondegenerate segments in $T$, whose interiors do not contain any branch point, are bounded below by some constant $c>0$.
Choose a maximal nondegenerate segment $[a,b]\subseteq T$ whose interior does not contain any branch points of $T$. Since $H$ is a homothety of $T$, for every $n\ge 1$ the segment $H^n[a,b]$ has the same property: it is a maximal closed nondegenerate segments in $T$ whose interiors do not contain any branch points. However, the length of $H^n[a,b]$ is $\lambda^n(\phi)d(a,b)$ which converges to $0$ as $n\to\infty$, yielding a contradiction.

Thus indeed $\lambda(K_T)=\{1\}$. Since we already know that $Ker(\lambda|_{K_T})=\{1\}$, it follows that $K_T=\{1\}$.
Since $K_T$ has finite index in $\Stab_{Out(F_N)}([T])$, we conclude that $\Stab_{Out(F_N)}([T])$ is finite, as required. This completes the proof of the theorem.
\end{proof}

As pointed out in the Introduction, since by a result of~\cite{WZ} every finite subgroup of $\Out(F_N)$ has order at most $N!2^N$, it follows that in Theorem~\ref{thm:vc} we have $|P_T|\le N!2^N$, so that $\Stab_{Out(F_N)}([T])$ always has a cyclic subgroup (possibly trivial) of index at most $N!2^N$.



\begin{rem}
\label{non-free-actions}
(a) The statement of Proposition~\ref{discrete-image} holds also in the case where the $\FN$-action on $T$ is only very small and not necessarily free.  The proof follows the same lines, but gets technically a little more involved.

\smallskip
\noindent
(b) The conclusion of Theorem~\ref{thm:vc}, however,  becomes wrong if one omits the hypothesis that the $\FN$-action on $T$ is free. Easy counterexamples are provided for example by simplicial trees $T$ with trivial edge stabilizers and large vertex stabilizers: As those are automatically free factors, one has many automorphisms which act non-trivially on some of the vertex groups, but leave invariant the free product structure that is realized by $T$.
\end{rem}

\section{Tits alternative for dynamically large subgroups of
  $\Out(F_N)$}\label{sect:TA}

In this section we apply Theorem~\ref{thm:vc} to
give a new proof of the Tits
alternative for subgroups of $\Out(F_N)$ which contain an iwip automorphism.

\begin{defn}
\label{defn:iwips}
%
\noindent (a)  An outer automorphism $\phi\in \Out(F_N)$ is called {\em irreducible with irreducible powers (iwip)} if no positive power of $\phi$ preserves the conjugacy class of a
proper free factor of $F_N$.

\smallskip
\noindent
(b)
An outer automorphism  $\phi\in \Out(F_N)$ is
called \emph{atoroidal} if it has no non-trivial periodic conjugacy classes, i.e. if there do not exist $t\ge 1$ and $w\in F_N-\{1\}$ such that
$\phi^t$ fixes the conjugacy class $[w]$ of $w$ in $F_N$.
\end{defn}

It was proved in~\cite{BH92} that for $N\ge 2$ an iwip automorphism $\phi \in \Out(\FN)$ is non-atoroidal if and only if $\phi$ is induced, via an identification of $\FN$ with the fundamental group of a compact  surface $S$ with a single boundary component, by a pseudo-Anosov homeomorphism $h: S \to S$.

\begin{rem}\label{defn:irr}
The terminology ``iwip'' derives from the groundbreaking paper \cite{BH92}: Bestvina-Handel call
an element $\phi\in \Out(F_N)$ is \emph{reducible} if there
exists a free product decomposition $F_N=C_1\ast\dots C_k\ast F'$,
where $k\ge 1$ and $C_i\ne \{1\}$, such that $\phi$ permutes the
conjugacy classes of subgroups $C_1,\dots, C_k$ in $F_N$. An element
$\phi\in \Out(F_N)$ is called \emph{irreducible} if it is not
reducible.

It is not hard to see that an element $\phi\in \Out(F_N)$ is an
iwip if and only if for every $n\ge 1$ the power $\phi^n$ is irreducible (sometimes such automorphisms are also called \emph{fully irreducible}).
\end{rem}

It is known by a result of Levitt and Lustig~\cite{LL} that
iwips have a simple ``North-South''
dynamics on the compactified Outer space $\CVNbar$:

\begin{prop}\label{prop:LL}\cite{LL}
Let $\phi\in \Out(F_N)$ be an iwip. Then there exist unique $[T_+]=[T_+(\phi)],[T_-]=[T_-(\phi)]\in \CVNbar$ with the following properties:
\begin{enumerate}
\item The elements $[T_+],[T_-]\in \CVNbar$ are the only fixed points of $\phi$ in $\CVNbar$.
\item For any $[T]\in \CVNbar$, $[T]\ne [T_-]$ we have $\lim_{n\to\infty} [T\phi^n]=[T_+]$ and for any $[T]\in \CVNbar$, $[T]\ne [T_+]$ we have $\lim_{n\to\infty} [T\phi^{-n}]=[T_-]$.

\item We have $T_+\phi=\lambda_+T$ and $T_-\phi^{-1}=\lambda_-T_-$   where $\lambda_+>1$ and $\lambda_->1$. Moreover $\lambda_+$ is the Perron-Frobenius eigenvalue of any train-track representative of $\phi$ and $\lambda_-$ is the Perron-Frobenius eigenvalue of any train-track representative of $\phi^{-1}$.
\end{enumerate}
\end{prop}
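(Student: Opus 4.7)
The plan is to prove Proposition~\ref{prop:LL} in three stages: construct the candidate fixed trees $T_\pm$ via train track theory, verify the homothety identities that make $[T_\pm]$ fixed by $\phi$, and establish the convergence $[T\phi^n]\to[T_+]$ for every $[T]\ne[T_-]$.

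First, invoke the Bestvina--Handel theorem~\cite{BH92} to obtain an irreducible train track representative $f:\Gamma\to\Gamma$ of $\phi$ on a marked finite graph, with transition matrix whose Perron--Frobenius eigenvalue $\lambda_+$ satisfies $\lambda_+>1$ (iwipness forces the matrix to be primitive in some iterate, ruling out $\lambda_+=1$ since an iwip in rank $N\geq 2$ has infinite order). Assign each edge of $\Gamma$ length equal to its coordinate in the positive PF eigenvector; then $f$ stretches every edge uniformly by $\lambda_+$, so the lift $\tilde f$ to the universal cover $\tilde\Gamma$ is a $\lambda_+$-homothety representing some $\Phi\in\Aut(F_N)$ that projects to $\phi$. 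Apply the same construction to $\phi^{-1}$ to produce $\lambda_->1$ and its representative.

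Second, define
\[
\ell_+(w)\;:=\;\lim_{n\to\infty}\lambda_+^{-n}\,\|\phi^n(w)\|_\Gamma\qquad(w\in F_N),
\]
where $\|\cdot\|_\Gamma$ is the length of a cyclically reduced edge-path representative. The train track property (no folding of legal paths under iteration of $f$) together with bounded cancellation shows that the limit exists and that $\ell_+$ is a non-zero hyperbolic translation length function. By Culler--Morgan theory, $\ell_+$ is realized by a unique minimal isometric action of $F_N$ on an $\R$-tree $T_+$; very-smallness of $T_+$, hence $[T_+]\in\CVNbar$, follows from the standard description of $\cvnbar$ as the space of very small actions. From the definition $\ell_+\circ\phi=\lambda_+\ell_+$, so $T_+\phi=\lambda_+T_+$ and thus $[T_+]\phi=[T_+]$. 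Construct $T_-$ dually from $\phi^{-1}$.

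The convergence statement $[T\phi^n]\to[T_+]$ for $[T]\ne[T_-]$ is the main obstacle. For $T\in\cvn$, compare $\|\phi^n(w)\|_T$ with $\|\phi^n(w)\|_\Gamma$ via a marking, and decompose $\phi^n(w)$ into increasingly long legal pieces that asymptotically concentrate in the attracting lamination $L_+$ of $\phi$. The contribution of each legal piece to $\|\phi^n(w)\|_T$ scales like $\lambda_+^n$ up to a multiplicative constant $C(T)>0$ that measures the pairing of $T$ with $L_+$; the illegal-turn corrections and boundary effects are absorbed in the limit, yielding $\lambda_+^{-n}\|\phi^n(w)\|_T\to C(T)\,\ell_+(w)$, i.e.\ $[T\phi^n]\to[T_+]$. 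The condition $[T]\ne[T_-]$ is precisely the condition $C(T)\ne 0$: dually, $T_-$ is characterized as the unique projective class whose length function vanishes on $L_+$. One then extends the convergence from $\cvn$ to all of $\CVNbar\setminus\{[T_-]\}$ by density of $\CVN$ in $\CVNbar$, compactness, and continuity, using the same pairing argument to preclude subsequential accumulation at $[T_-]$. The hard technical core is thus the construction and control of the pairing of an arbitrary very small tree with the attracting lamination; once convergence is in hand, uniqueness of the two fixed points is automatic (any $\phi$-fixed $[T]\ne[T_-]$ satisfies $[T]=\lim_n[T\phi^n]=[T_+]$), and assertion (3) is immediate from the construction together with the fact that all train track representatives of an iwip share a common PF eigenvalue.
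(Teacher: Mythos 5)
The paper does not prove Proposition~\ref{prop:LL}; it is quoted as a known theorem of Levitt--Lustig~\cite{LL}, so there is no internal proof to measure you against. Judged on its own terms, your reconstruction gets the scaffolding right: the train-track construction of $\lambda_\pm$, the definition of $\ell_+(w)=\lim_n\lambda_+^{-n}\|\phi^n(w)\|_\Gamma$, the passage to $T_+$ via Culler--Morgan, the identity $T_+\phi=\lambda_+T_+$, and the observation that (1) and (3) fall out once (2) is proved -- all of this is the standard setup (cf.~\cite{GJLL}). But (2) is where essentially all of the content of~\cite{LL} lives, and your treatment of it asserts rather than proves the two genuinely hard points.

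First, the claim that the ``pairing'' $C(T)$ of a very small tree $T$ with the attracting lamination vanishes precisely when $[T]=[T_-]$ is a deep fact, not a definition. For $T\in\cvn$ it is easy that $C(T)>0$, but for boundary trees -- where the action may have nontrivial point stabilizers, non-discrete orbits, or may be a geometric tree dual to some other lamination -- ruling out $C(T)=0$ for all $[T]\ne[T_-]$ is the crux, and your sketch provides no argument for it. (It also tacitly invokes intersection-form technology in the spirit of later work; the argument in~\cite{LL} is structured differently, via dual laminations and Whitehead-graph style considerations.) Second, the proposed extension ``from $\cvn$ to all of $\CVNbar\setminus\{[T_-]\}$ by density, compactness, and continuity'' does not work as stated: pointwise convergence of the maps $[T]\mapsto[T\phi^n]$ on a dense subset does not propagate to its closure without an equicontinuity or uniform-domination input, and supplying exactly that input at boundary points is again the heart of~\cite{LL} (indeed, the paper notes immediately after the proposition that local uniformity of the convergence is a separate theorem of~\cite{LL}, alternatively recoverable from~\cite{KL4}). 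Minor point: your justification of $\lambda_+>1$ (``an iwip has infinite order'') is not quite the right reason; the correct one is that a primitive nonnegative integer matrix of size $\ge 2$ has Perron--Frobenius eigenvalue $>1$, and iwipness ensures the transition matrix of an (iterate of a) train track map is primitive of size $\ge 2$. In summary, the skeleton is correct but the load-bearing step -- North-South convergence at arbitrary boundary points -- is the missing idea, and it cannot be dispatched by the density-and-compactness remark you give.
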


In~\cite{LL} it is also proved that convergence in (2) is locally
uniform and hence uniform on compact subsets.  Alternatively, we
proved in \cite{KL4}, using geodesic currents and the intersection
form, that pointwise North-South dynamics for the action of an
atoroidal iwip $\phi$ on $\CVNbar$ and on $\mathbb PCurr(F_N)$ already
implies that the convergence in part (2) of Proposition~\ref{prop:LL}
is uniform on compact subsets.

We give a precise statement:

\begin{prop}\label{prop:LL+}\cite{LL, KL4}
Let $\phi\in \Out(F_N)$ be an iwip, and let $[T_+]=[T_+(\phi)]$ be as in Proposition \ref{prop:LL}.

Then for any compact subset $K\subseteq \CVNbar-[T_-]$ and any neighborhood $U$ of $[T_+]$ there exists $M\ge 1$ such that for every $n\ge M$ we have $K\phi^n\subseteq U$.
\end{prop}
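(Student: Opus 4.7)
The plan is to upgrade the pointwise North-South dynamics of Proposition~\ref{prop:LL} to uniform convergence on compact subsets via a standard compactness and continuity argument, whose crucial ingredient is the existence of arbitrarily small open neighborhoods of $[T_+]$ in $\CVNbar$ that are forward-invariant under $\phi$.

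The main step, and also the main obstacle, is the following trapping lemma: for every open neighborhood $U$ of $[T_+]$ in $\CVNbar$ there exists an open neighborhood $W$ of $[T_+]$ with $W \subseteq U$ and $W\phi \subseteq W$. I would establish this by exploiting Proposition~\ref{prop:LL}(3): the equality $T_+\phi = \lambda_+ T_+$ with $\lambda_+ > 1$ says that on the representative tree $T_+$ the automorphism $\phi$ is realized by a homothety of stretching factor $\lambda_+$. This contraction can be translated, via the equivariant Gromov-Hausdorff topology on $\CVNbar$, into the existence of forward-invariant neighborhoods of $[T_+]$. Alternatively, one can invoke the machinery of \cite{LL}, based on train-track maps and Perron-Frobenius theory, or the framework of \cite{KL4}, based on the intersection form between $\CVNbar$ and $\PCurr$; either approach delivers the required trapping. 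This is the truly non-formal step, since bare pointwise convergence alone does not suffice to build forward-invariant neighborhoods of an attracting fixed point, and genuine use must be made of the contracting structure of $\phi$ near $[T_+]$.

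Granted the trapping lemma, the rest of the argument is a routine finite-subcover construction. Let $K \subseteq \CVNbar \smsm \{[T_-]\}$ be compact and let $U \ni [T_+]$ be open. Fix an open set $W \subseteq U$ with $[T_+] \in W$ and $W\phi \subseteq W$. For each $[T] \in K$, Proposition~\ref{prop:LL}(2) produces an integer $n_T \ge 1$ with $[T]\phi^{n_T} \in W$; by continuity of the homeomorphism $\phi^{n_T}:\CVNbar\to\CVNbar$, there is an open neighborhood $V_T$ of $[T]$ in $\CVNbar$ with $V_T\phi^{n_T} \subseteq W$. By compactness of $K$, extract a finite subcover $V_{T_1},\ldots,V_{T_r}$ of $K$, with corresponding exponents $n_1,\ldots,n_r$, and set $M := \max_i n_i$.

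To conclude, take any $n \ge M$ and any $[T] \in K$. Choose an index $i$ with $[T] \in V_{T_i}$; then $[T]\phi^{n_i} \in W$, and iterated forward invariance of $W$ under $\phi$ yields
\[
[T]\phi^n \, = \, \bigl([T]\phi^{n_i}\bigr)\phi^{\, n - n_i} \, \in \, W \, \subseteq \, U.
\]
Hence $K\phi^n \subseteq U$ for every $n \ge M$, as required.
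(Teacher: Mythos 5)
The paper does not actually prove this proposition: it is tagged \cite{LL, KL4}, and the surrounding text merely records that local uniform convergence is established in \cite{LL} (via train tracks and Perron--Frobenius theory) and re-derived for atoroidal iwips in \cite{KL4} (via the intersection form between trees and currents). So there is no internal proof to compare against, and an argument here must either cite those sources or genuinely reproduce their content.

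Your reduction is structurally correct: if one grants the trapping lemma (arbitrarily small open neighborhoods $W$ of $[T_+]$ with $W\phi\subseteq W$), then combining it with pointwise convergence and a finite subcover of $K$ does yield uniform convergence on $K$ exactly as you write. The problem is that the trapping lemma \emph{is} the entire content of the proposition, and the one way you attempt to prove it yourself --- reading off ``contraction near $[T_+]$'' from $T_+\phi = \lambda_+T_+$ with $\lambda_+>1$ --- does not work. That equation only describes the action of $\phi$ along the $\R_{>0}$-ray over $[T_+]$ in $\cvnbar$; after projectivizing it is just the fixed-point identity $[T_+]\phi=[T_+]$ and says nothing about the dynamics of $\phi$ transverse to the ray, which is what determines whether $[T_+]$ has small forward-invariant neighborhoods. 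Indeed, $T_-\phi=\lambda_-^{-1}T_-$ with $\lambda_-^{-1}<1$, yet $[T_-]$ is the \emph{repelling} fixed point, so the size of the stretching factor alone cannot distinguish attracting from repelling. Moreover, mere pointwise North--South dynamics on a compact space does not by itself produce forward-invariant neighborhoods of the attracting point (think of $x\mapsto x+1$ on $\R\cup\{\infty\}$, where every point tends to $\infty$ but $\infty$ admits no small forward-invariant neighborhood), so some genuine contraction estimate near $[T_+]$ is unavoidable. The real work in \cite{LL} and \cite{KL4} is precisely that estimate; your outline correctly isolates where it is needed, but the homothety heuristic you offer as an ``alternative'' is a gap, and the remainder of your proof ultimately defers to the same references the paper cites.
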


We can now show:

\begin{cor}\label{cor:free}
Let $\phi,\psi\in \Out(F_N)$ be atoroidal iwips such that $[T_\pm(\phi)], [T_\pm(\psi)]$ are four distinct points in $\CVNbar$. Then there exists $M\ge 1$ such that for any $m,n\ge M$ the subgroup $\langle \phi^m, \psi^n\rangle\le \Out(F_N)$ is free of rank two with free basis $\phi^m, \psi^n$.
\end{cor}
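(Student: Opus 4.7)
The plan is to apply the ping-pong lemma to the action of $\langle \phi^m, \psi^n\rangle$ on the compact Hausdorff space $\CVNbar$, using the uniform North-South dynamics of Proposition~\ref{prop:LL+} as the dynamical input. Since the four fixed trees $[T_\pm(\phi)], [T_\pm(\psi)]$ are pairwise distinct by hypothesis, one can first choose four pairwise disjoint open neighborhoods $U_+^\phi, U_-^\phi, U_+^\psi, U_-^\psi$ of these points, respectively, and fix a basepoint $[T_0]\in\CVNbar$ lying outside all four.

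Next, apply Proposition~\ref{prop:LL+} four times to convert these neighborhoods into ping-pong attracting sets. For $\phi$, taking the compact set $K=\CVNbar\smallsetminus U_-^\phi$ and target $U=U_+^\phi$ produces an integer $M_1$ with $K\phi^m\subseteq U_+^\phi$ for every $m\ge M_1$. The same argument applied to $\phi^{-1}$ (whose attracting and repelling trees are $[T_-(\phi)]$ and $[T_+(\phi)]$ respectively) yields $M_2$ with $(\CVNbar\smallsetminus U_+^\phi)\phi^{-m}\subseteq U_-^\phi$ for every $m\ge M_2$, and two analogous constants $M_3, M_4$ come from $\psi^{\pm 1}$. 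Set $M=\max(M_1, M_2, M_3, M_4)$.

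Fix $m, n\ge M$ and consider any non-trivial reduced word $w=a_1 a_2\cdots a_k$ in the letters $\phi^{\pm m}, \psi^{\pm n}$. Induction on $j$ shows that the partial product $[T_0] a_1\cdots a_j$ lies in the attracting neighborhood of the letter $a_j$. The base case follows because $[T_0]$ lies outside the repelling neighborhood of $a_1$. The inductive step requires that the attracting neighborhood of $a_j$ be disjoint from the repelling neighborhood of every letter $a_{j+1}\ne a_j^{-1}$ that can legally follow it in a reduced word; this reduces to a short case check which succeeds precisely because the four neighborhoods $U_\pm^\phi, U_\pm^\psi$ were chosen pairwise disjoint. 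Hence Proposition~\ref{prop:LL+} applies at each step and $[T_0] w$ ends up in an attracting neighborhood, so $[T_0] w\ne [T_0]$, whence $w\ne 1$ in $\Out(F_N)$. This gives that $\langle \phi^m, \psi^n\rangle$ is free of rank two on the claimed basis.

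The substantive dynamical content is entirely absorbed into Proposition~\ref{prop:LL+}, so no new idea is needed beyond it. The only non-formal step is the case analysis in the inductive step of the ping-pong, which is where the hypothesis that the four limit trees $[T_\pm(\phi)], [T_\pm(\psi)]$ are distinct enters essentially: without that hypothesis the attracting neighborhood of one letter could fail to be disjoint from the repelling neighborhood of an allowed successor, and the whole scheme would collapse.
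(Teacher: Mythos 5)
Your proof is correct and follows essentially the same route as the paper: choose pairwise disjoint neighborhoods of the four fixed trees, use Proposition~\ref{prop:LL+} to get a uniform power $M$ making each generator act as a contraction toward its attracting tree away from its repelling tree, and then invoke ping-pong. The paper simply cites ``the standard ping-pong argument'' at the end, whereas you spell out the basepoint version of ping-pong in detail; that is a presentational difference, not a mathematical one.
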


\begin{proof}
In $\CVNbar$ choose
disjoint open neighborhoods $U_+, U_-, V_+, V_-$ of $[T_+(\phi)]$,  $[T_-(\phi)]$,  $[T_+(\psi)]$, $[T_-(\psi)]$ respectively. By
Proposition~\ref{prop:LL+}
there exists $M\ge 1$ such that for every $n\ge M$ we have $(\CVNbar-U_-)\phi^n\subseteq U_+$, $(\CVNbar-V_-)\psi^n\subseteq V_+$, $(\CVNbar-U_+)\phi^{-n}\subseteq U_-$, and $(\CVNbar-V_+)\psi^{-n}\subseteq V_-$. Then the standard ping-pong argument implies that for every $m,n\ge M$ the subgroup $\langle \phi^m, \psi^n\rangle\le \Out(F_N)$ is free with free basis $\phi^m, \psi^n$.
\end{proof}

A subgroup of $\Out(\FN)$ will be called {\em dynamically large} if it contains an atoroidal iwip automorphism. Such subgroups have many nice properties, and their ``negative'', {\em dynamically small} subgroups (i.e. subgroups without atoroidal iwips) seem to be rather special, compare
\cite{Handel-Mosher-new}.

\begin{thm}[Tits alternative for dynamically large subgroups]\label{thm:TA}

Let $G\le Out(F_N)$ be a subgroup such that there exists an
atoroidal iwip $\phi\in G$. Let $[T_+(\phi)],[T_-(\phi)]\in \CVNbar$ be the attracting and repelling fixed points of $\phi$ in $\CVNbar$.
Then exactly one of the following occurs:
\begin{enumerate}
\item The group $G$ is virtually cyclic and preserves the set $\{[T_+(\phi)],[T_-(\phi)]\}\subseteq \CVNbar$.
\item The group $G$ contains an iwip $\psi=g\phi g^{-1}$ for
  some $g\in G$  such that
  $\{[T_+(\phi)],[T_-(\phi)]\}\cap\{[T_+(\psi)],[T_-(\psi)]\}=\emptyset$. Moreover, in this case there exists
an exponent
$M\ge 1$ such that the subgroup $\langle \phi^M, \psi^M\rangle\le G$ is free of rank two.
\end{enumerate}
\end{thm}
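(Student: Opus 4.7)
The plan is to analyze how $G$ permutes the two-element fixed-point set $S:=\{[T_+(\phi)],[T_-(\phi)]\}\subseteq\CVNbar$ and to establish the following dichotomy: either every element of $G$ preserves $S$ setwise, in which case $G$ is virtually cyclic by Theorem~\ref{thm:vc}; or else some element of $G$ sends $S$ to a set disjoint from $S$, in which case Corollary~\ref{cor:free} yields a free subgroup of rank two.

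The key auxiliary claim I would first establish is the following sub-lemma: \emph{any $h\in\Out(F_N)$ that fixes $[T_+(\phi)]$ also fixes $[T_-(\phi)]$.} Since $\phi$ is an atoroidal iwip, the tree $T_+(\phi)$ carries a free $F_N$-action, so by Theorem~\ref{thm:vc} the group $\Stab_{\Out(F_N)}([T_+(\phi)])$ is virtually cyclic and contains the infinite-order element $\phi$. The conjugate $h\phi h^{-1}$ is again an atoroidal iwip which, by a direct check, also lies in $\Stab_{\Out(F_N)}([T_+(\phi)])$; and its two fixed points in $\CVNbar$ are $\{[T_+(\phi)],\, h\cdot[T_-(\phi)]\}$, since $h$ fixes $[T_+(\phi)]$. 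By virtual cyclicity there exist nonzero integers $a,b$ with $(h\phi h^{-1})^a=\phi^b$. Passing to fixed-point sets in $\CVNbar$ and using that a nonzero power of an iwip has the same pair of fixed points as the iwip itself (Proposition~\ref{prop:LL} applied to both sides), one obtains $\{[T_+(\phi)],\, h\cdot[T_-(\phi)]\}=\{[T_+(\phi)],[T_-(\phi)]\}$, whence $h\cdot[T_-(\phi)]=[T_-(\phi)]$.

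Armed with the sub-lemma, I would then show that every $h\in G$ either preserves $S$ setwise or satisfies $h\cdot S\cap S=\emptyset$. The delicate case to exclude is $h\cdot[T_+(\phi)]=[T_-(\phi)]$ with $h\cdot[T_-(\phi)]\notin S$: here one considers the atoroidal iwip $\psi_h:=h\phi h^{-1}$, whose fixed-point set is $h\cdot S=\{[T_-(\phi)],\, h\cdot[T_-(\phi)]\}$, and applies the sub-lemma to $\psi_h$ to conclude that $\phi$ (which already fixes the element $[T_-(\phi)]$ of $Fix(\psi_h)$) must also fix $h\cdot[T_-(\phi)]$, contradicting $h\cdot[T_-(\phi)]\notin Fix(\phi)$. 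The case $h\cdot[T_+(\phi)]=[T_+(\phi)]$ follows at once from the sub-lemma, and the symmetric cases are handled identically.

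In the first branch of the dichotomy, the subgroup of $G$ fixing $S$ pointwise has index at most two in $G$ and is contained in the virtually cyclic group $\Stab_{\Out(F_N)}([T_+(\phi)])$; hence $G$ is virtually cyclic, giving conclusion (1). In the second branch, a witness $g\in G$ with $g\cdot S\cap S=\emptyset$ produces an atoroidal iwip $\psi:=g\phi g^{-1}\in G$ with $Fix(\psi)\cap Fix(\phi)=\emptyset$, and Corollary~\ref{cor:free} supplies $M\ge1$ so that $\langle\phi^M,\psi^M\rangle\le G$ is free of rank two, giving conclusion (2). The two conclusions are mutually exclusive because (1) forces $G$ virtually cyclic while (2) embeds $F_2$ in $G$. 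I expect the main obstacle to be the sub-lemma itself — more precisely, extracting equality of the fixed-point sets of the two iwips $\phi$ and $h\phi h^{-1}$ from the virtual cyclicity of $\Stab_{\Out(F_N)}([T_+(\phi)])$; once this is in hand, the case analysis and the final invocation of Corollary~\ref{cor:free} are routine.
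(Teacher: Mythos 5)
Your proof is correct and follows essentially the same route as the paper: both rely on Theorem~\ref{thm:vc} to get virtual cyclicity of $\Stab_{\Out(F_N)}([T_\pm(\phi)])$, deduce that any conjugate of $\phi$ fixing one of $[T_+(\phi)],[T_-(\phi)]$ must share a nonzero power with $\phi$ and hence have the same fixed-point pair, and then invoke Corollary~\ref{cor:free} for the ping-pong. The only difference is cosmetic: you isolate this mechanism as a stand-alone sub-lemma (``fixing one pole forces fixing the other'') and run a short case analysis, whereas the paper proves the same fact inline when verifying that the four points $[T_\pm(\phi)],[T_\pm(\psi)]$ are pairwise distinct.
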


\begin{proof}
It is well-known (see, for example, \cite{GJLL}) that if $\phi$ is an atoroidal iwip, then $T_+(\phi)$ and  $T_-(\phi)$ are free $F_N$-trees.

Therefore by Theorem~\ref{thm:vc} we have
$Stab_{Out(F_N)}[T_+(\phi)]$ and
$Stab_{Out(F_N)}[T_-(\phi)]$ are virtually cyclic and contain
$\langle\phi\rangle$ as a subgroup of finite index.

If $G$ preserves the set  $\{[T_+(\phi)],[T_-(\phi)]\}$, then $G$ has
a subgroup of index at most 2 that fixes each of $[T_\pm (\phi)]$  and
hence $G$ is virtually cyclic. Thus we may assume that $G$ does not
preserve the set $\{[T_+(\phi)],[T_-(\phi)]\}$. So there exists $g\in G$ such
that $[T_{+}(\phi)]g\not \in \{[T_+(\phi)],[T_-(\phi)]\}$ or that
$[T_{-}(\phi)]g\not\in \{[T_+(\phi)],[T_-(\phi)]\}$. We assume the
former as the other case is symmetric. Thus $[T_{+}(\phi)]g\ne
[T_\pm(\phi)]$. Note that $\psi=g^{-1}\phi g\in G$ is also an
atoriodal iwip and that $[T_{+}(\psi)]= [T_{+}(\phi)]g$. We claim that
$[T_{-}(\phi)]\ne [T_\pm(\phi)]g$. Indeed, otherwise we have
$[T_-(g^{-1}\phi g)]=[T_+(\phi)]$ or $[T_-(g^{-1}\phi g)]=[T_-(\phi)]$ and hence $g^{-1}\phi g\in
Stab_{Out(F_N)}[T_+(\phi)]$ or $g^{-1}\phi g\in
Stab_{Out(F_N)}[T_-(\phi)]$. In either case (since both stabilizers
contain $\langle\phi\rangle$ as subgroup of finite index)
$g^{-1}\phi^kg=\phi^l$ for some $k\ne 0,l\ne 0$ and therefore
$g^{-1}\phi^k g$ has the same fixed points in $\CVNbar$ as does
$\phi^l$, namely, $[T_\pm (\phi)]$. This contradicts the fact that
$g^{-1}\phi g$ fixes the point $[T_+(\phi)]g\ne [T_\pm(\phi)]$. Thus
$[T_\pm(\phi)], [T_\pm(\psi)]$ are four distinct points. Therefore, by
Corollary~\ref{cor:free}  sufficiently high powers $\phi^M, \psi^M$ freely generate a free subgroup of rank two in $G$, as required.
\end{proof}

It is possible to prove Theorem \ref{thm:TA} also for subgroups $G\le Out(F_N)$
which contain an iwip $\phi\in G$ that is not atoroidal.  Indeed, precisely the same proof applies, except that in this case the limit trees $T_+(\phi)$ and $T_-(\phi)$ do not have a free $\FN$-action, so that we can not apply Theorem~\ref{thm:vc}
to show that for such $\phi$ the group $\Stab_{Out(F_N)}([T_+(\phi)])$ is virtually cyclic.  However, we can provide an alternative argument:

\begin{prop}\label{prop:toroidal}
Let $N\ge 2$ and let $\phi\in \Out(F_N)$ be an iwip which is not atoroidal. Then:
\begin{enumerate}
\item[(1)]
$\Stab_{Out(F_N)}([T_+(\phi)])$ is virtually cyclic.
\item[(2)] The same conclusion as in Theorem \ref{thm:TA} holds for subgroups of $\Out(F_N)$ which contain an
iwip $\phi$
which is not atoroidal.
\end{enumerate}
\end{prop}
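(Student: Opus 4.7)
The plan is to exploit the surface structure underlying a non-atoroidal iwip. By a result of Bestvina--Handel \cite{BH92} (cited just before Remark~\ref{defn:irr}), a non-atoroidal iwip $\phi\in \Out(F_N)$ is induced, via an identification $F_N\cong \pi_1(S)$, by a pseudo-Anosov homeomorphism $h:S\to S$ of a compact surface $S$ with a single boundary component. Let $w_0\in F_N$ represent the boundary loop, so that the conjugacy class $[w_0]$ is $\phi$-invariant. Under this identification, the forward limit tree $T_+(\phi)$ is, up to positive scaling, the dual $\R$-tree to the stable measured foliation $\mathcal F_+$ of $h$ lifted to the universal cover of $S$; in particular $w_0$ acts elliptically on $T_+(\phi)$, fixing the point corresponding to the boundary leaf.

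For Part~(1), we show that $\Stab_{Out(F_N)}([T_+(\phi)])$ is contained in the image of the mapping class group $\mathrm{Mod}(S)$ in $\Out(F_N)$. Any $\psi\in \Stab_{Out(F_N)}([T_+(\phi)])$ is represented by a homothety of $T_+(\phi)$, which necessarily permutes the $F_N$-conjugacy classes of maximal non-trivial point stabilizers. For the dual tree of a pseudo-Anosov foliation on a surface with a single boundary component, the only such conjugacy class is that of $\langle w_0\rangle$. Hence $\psi$ preserves the peripheral class $[w_0^{\pm 1}]$, so by the Dehn--Nielsen--Baer theorem for surfaces with boundary, $\psi$ lies in the image of $\mathrm{Mod}(S)$. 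Under the standard correspondence between projective classes of dual $\R$-trees and points of Thurston's projective measured foliation space $\mathcal{PMF}(S)$, the subgroup $\Stab_{Out(F_N)}([T_+(\phi)])$ is then identified with the $\mathrm{Mod}(S)$-stabilizer of $[\mathcal F_+]$. The classical fact that this stabilizer is virtually cyclic, containing a power of $h$ as a finite-index subgroup, then yields Part~(1).

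For Part~(2), the proof of Theorem~\ref{thm:TA} carries over essentially verbatim. The north-south dynamics of Proposition~\ref{prop:LL+} and the ping-pong argument of Corollary~\ref{cor:free} are valid for every iwip, atoroidal or not: atoroidality is used in neither statement nor proof. The only place in the argument of Theorem~\ref{thm:TA} where atoroidality actually entered was in invoking Theorem~\ref{thm:vc} to obtain virtual cyclicity of $\Stab_{Out(F_N)}([T_\pm(\phi)])$; Part~(1) of the present proposition, applied to $\phi$ and to $\phi^{-1}$, supplies the required substitute. The remainder of the argument (passing to a conjugate $\psi=g\phi g^{-1}$ whose fixed-point set in $\CVNbar$ is disjoint from $\{[T_\pm(\phi)]\}$ and applying ping-pong to large common powers of $\phi$ and $\psi$) then goes through without change.

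The main obstacle is the structural step in Paragraph~2, namely the identification of maximal non-trivial point stabilizers of $T_+(\phi)$ with $F_N$-conjugates of $\langle w_0\rangle$. This relies on standard but not entirely trivial facts about $\R$-trees dual to measured foliations on surfaces with boundary. Once this identification is in hand, Part~(1) reduces to a well-known pseudo-Anosov stabilizer result inside $\mathrm{Mod}(S)$, and Part~(2) is a direct transcription of the proof of Theorem~\ref{thm:TA}.
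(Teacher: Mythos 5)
Your argument follows the paper's proof essentially step for step: reduce to a pseudo-Anosov $h$ on a once-punctured surface via Bestvina--Handel, identify $T_+(\phi)$ with the dual tree of the stable foliation, show $\psi$ preserves the peripheral conjugacy class to conclude $\psi\in \mathrm{Mod}(S)^\pm$ via Dehn--Nielsen--Baer, pass to the $\mathcal{PML}(S)$-stabilizer by the equivariant injection, and finish with Ivanov's classical pseudo-Anosov stabilizer lemma; Part~(2) is the same verbatim transcription. The only cosmetic difference is that you deduce preservation of the boundary class from the homothety permuting conjugacy classes of point stabilizers, while the paper phrases the same observation via the translation-length identity $\|c\|_{T_+}=0 \Rightarrow \|\psi(c)\|_{T_+}=0$.
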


\begin{proof}
As pointed out above, it suffices to prove statement (1). Part (2) then follows by exactly the same argument as in the proof of Theorem~\ref{thm:TA}. The only difference is that in the case where $G\le \Out(F_N)$ contains an iwip $\phi$ that is not atoroidal, in order to show that $\Stab_{Out(F_N)}([T_+(\phi)])$ is virtually cyclic we invoke part (1) of this proposition rather than Theorem~\ref{thm:vc}. Thus it is enough to establish (1):

  A result of Bestvina and Handel (see Theorem~4.1 in ~\cite{BH92})
  shows that if $\phi\in \Out(F_N)$ is an iwip which is not atoroidal
  then there exists a compact connected surface $S$ with a single
  boundary component and an identification $F_N=\pi_1(S, x_0)$ (where
  we assume that $x_0$ belongs to the boundary circle of $S$) such
  that $\phi$ is induced by a pseudo-Anosov homeomorphism $f$ of
  $S$. Let $c\in \pi_1(S)$ correspond to the boundary circle of
  $S$. Thus, if $S$ is orientable, then $N=2k$ is even and $F_N$
  has a free basis $a_1,b_1,\dots, a_k, b_k$ such that
  $c=[a_1,b_1]\dots [a_k,b_k]$. If $S$ is non-orientable, there is a
  free basis $a_1,\dots, a_N$ of $F_N$ such that $c=a_1^2a_2^2\dots
  a_N^2$.

  In this case $T_+:=T_+(\phi)$ is, up to rescaling, exactly the
  $\mathbb R$-tree $T_{\frak L}$ which is dual to the stable
  measured lamination $\frak L\in \mathcal{ML}(S)$ of the pseudo-Anosov $f$
  (see Ch. 11.12 in \cite{Kap} for details related to the construction
  of a dual $\mathbb R$-tree defined by a measured lamination on
  $S$). Moreover, by construction of $T_{\frak L}$, a nontrivial
  element $g\in F_N$ acts with a fixed point on $T_{\frak L}=T_+$
  if and only if $g$ is conjugate in $F_N$ to a nonzero power of $c$.

  Let $Mod(S)^\pm$ be the full mapping class group of $S$ (including
  isotopy classes of orientation-reversing homeomorphisms of $S$ if
  $S$ is orientable). It
  is well known that in this case $Mod(S)^\pm\le \Out(F_N)$ and in
  fact

\[
Mod(S)^\pm=\{\phi\in \Out(F_N): \phi([c])=[c^{\pm 1}]\}\le \Out(F_N).
\]

Let $\psi\in \Stab_{Out(F_N)}([T_+])$, that is $T_+\psi=\lambda T_+$
for some $\lambda>0$.

Since $||c||_{T_+}=0$, we have
\[
||\psi(c)||_{T_+}=||c||_{T_+\psi}=||c||_{\lambda T_+}=\lambda||c||_{T_+}=0.
\]
Thus $||\psi(c)||_{T_+}=0$ and hence $\psi(c)$ is conjugate to a power
of $c$ in $F_N$. Moreover, since $\psi$ is an (outer) automorphism, we
get $\psi([c])=[c]^{\pm 1}$. Hence $\psi\in Mod(S)^\pm\le
\Out(F_N)$. This shows that $\Stab_{Out(F_N)}([T_+])\le Mod(S)^\pm$
and in fact $\Stab_{Out(F_N)}([T_+])\le Stab_{Mod(S)^\pm}([T_{\frak
L}])$.

It is well known (see, for example, \cite{Be88}) that the map from the
space of projective measured laminations $\mathcal{PML}(S)$ to the
space of projectivized $\mathbb R$-trees, that takes an element of
$\mathcal{PML}(S)$ and sends it to the projective class of its dual
$\mathbb R$-tree is $Mod(S)^\pm$-equivariant and injective. Hence
$\Stab_{Mod(S)^\pm}([T_{\frak L}])=\Stab_{Mod(S)^\pm}([\frak
L])$. Since $\frak L$ is the stable measured lamination associated
to a pseudo-Anosov $f$, the group $\Stab_{Mod(S)^\pm}([\frak L])$ is
virtually cyclic (see Lemma 5.10 in \cite{Ivanov}). Therefore
$\Stab_{Out(F_N)}([T_+])$ is virtually cyclic, as claimed.
\end{proof}

Recall that if $G$ is a group and $H\le G$ is a subgroup, the \emph{commensurator} or \emph{virtual normalizer} of $H$ in $G$ is the subgroup
\[
Comm_G(H)=:\{g\in G\mid [H:H\cap g^{-1}Hg]<\infty, \text{ and } [g^{-1}Hg:H\cap g^{-1}Hg]<\infty.\}
\]
The commensurator $Comm_G(H)$ always contains the normalizer of $H$ in $G$.
As a consequence of Theorem~\ref{thm:vc} and part (1) of Proposition~\ref{prop:toroidal}, we obtain:

\begin{cor}\label{cor:comm}
Let $N\ge 2$ and let $\phi\in \Out(F_N)$ be an iwip. Then the commensurator $Comm_{Out(F_N)}(\langle \phi\rangle)$ is virtually cyclic.
\end{cor}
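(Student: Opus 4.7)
The plan is to show that every $g\in Comm_{Out(F_N)}(\langle \phi\rangle)$ preserves the unordered pair $\{[T_+(\phi)], [T_-(\phi)]\}\subseteq \CVNbar$ setwise, and then to deduce virtual cyclicity from the stabilizer results already established in this paper.

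First I would unwind the commensurator condition. Since $\phi$ is an iwip it has infinite order, so if $g\in Comm_{Out(F_N)}(\langle \phi\rangle)$, the finite-index condition $[\langle\phi\rangle:\langle\phi\rangle\cap g^{-1}\langle\phi\rangle g]<\infty$ yields nonzero integers $k,l$ with $g^{-1}\phi^k g=\phi^l$. Every nonzero power of an iwip is again an iwip, and $\phi^n$ visibly fixes both $[T_+(\phi)]$ and $[T_-(\phi)]$; by Proposition~\ref{prop:LL} these are its only fixed points in $\CVNbar$. Hence the fixed-point set of $\phi^l$ in $\CVNbar$ is $\{[T_+(\phi)],[T_-(\phi)]\}$, while a direct computation using the right action shows that the fixed-point set of $g^{-1}\phi^k g$ is the translate $\{[T_+(\phi)],[T_-(\phi)]\}\cdot g$. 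Equating these two sets forces $g$ to preserve the pair $\{[T_+(\phi)],[T_-(\phi)]\}$ setwise.

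Next I would invoke the stabilizer results. The setwise stabilizer of $\{[T_+(\phi)], [T_-(\phi)]\}$ in $\Out(F_N)$ contains the joint pointwise stabilizer $Stab_{Out(F_N)}([T_+(\phi)])\cap Stab_{Out(F_N)}([T_-(\phi)])$ as a subgroup of index at most $2$, and this pointwise stabilizer sits inside $Stab_{Out(F_N)}([T_+(\phi)])$. If $\phi$ is atoroidal, then $T_+(\phi)$ is a free $F_N$-tree and Theorem~\ref{thm:vc} tells us that $Stab_{Out(F_N)}([T_+(\phi)])$ is virtually cyclic. If $\phi$ is not atoroidal, part~(1) of Proposition~\ref{prop:toroidal} yields the same conclusion. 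Either way, the setwise stabilizer of $\{[T_+(\phi)],[T_-(\phi)]\}$ is virtually cyclic, and therefore so is its subgroup $Comm_{Out(F_N)}(\langle \phi\rangle)$.

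There is no real obstacle to this plan, since all of the heavy lifting has already been carried out in Theorem~\ref{thm:vc} and Proposition~\ref{prop:toroidal}. The only modest points to watch are the careful bookkeeping of the right action when computing the fixed-point set of a conjugate, and the observation that an iwip automorphism has infinite order (so that the commensurator condition really does produce a nontrivial conjugation relation between nonzero powers of $\phi$, which is what lets one bring Proposition~\ref{prop:LL} to bear).
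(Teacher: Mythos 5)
Your proposal is correct and follows essentially the same route as the paper: extract the relation $g^{-1}\phi^m g=\phi^n$ from commensurability, compare fixed-point sets in $\CVNbar$ to see that $g$ preserves $\{[T_+(\phi)],[T_-(\phi)]\}$, and then invoke Theorem~\ref{thm:vc} (atoroidal case) or Proposition~\ref{prop:toroidal}~(1) (non-atoroidal case) to conclude that the setwise stabilizer, hence the commensurator, is virtually cyclic. The only difference is that you spell out a few bookkeeping points, such as the index-at-most-$2$ reduction to the pointwise stabilizer, which the paper leaves implicit.
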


\begin{proof}
Let $g\in Comm_{Out(F_N)}(\langle \phi\rangle)$. For some non-zero integers $n,m$ we have $g^{-1}\phi^m g=\phi^n$.

Thus $\{[T_\pm(g^{-1}\phi^m g)]\}=\{[T_\pm(g^{-1}\phi g)]\}=\{[T_\pm(\phi)]g\}$ are the only two fixed points of the iwip $g^{-1}\phi^m g$ in $\CVNbar$. On the other hand $\{[T_\pm(\phi)]\}$ are the only two fixed points of the iwip $\phi^n$ in $\CVNbar$. Therefore $\{[T_\pm(\phi)]\}g=\{[T_\pm(\phi)]\}$, that is $g\in \Stab_{Out(F_N)}(\{[T_\pm(\phi)]\})$,
which gives
$Comm_{Out(F_N)}(\langle \phi\rangle)\le \Stab_{Out(F_N)}(\{[T_\pm(\phi)]\})$.

By Theorem~\ref{thm:vc} and part (1) of Proposition~\ref{prop:toroidal} the group $\Stab_{Out(F_N)}(\{[T_\pm(\phi)]\})$ is virtually cyclic. Hence $Comm_{Out(F_N)}(\langle \phi\rangle)$ is also virtually cyclic.
\end{proof}
Corollary~\ref{cor:comm} can also be derived, by a similar argument to the one given above, directly from Theorem~2.4 in \cite{BFH97}.



\end{document}